\documentclass[final]{siamltex}


\usepackage{graphicx}   
\usepackage{multirow}
\usepackage{amsmath}
\usepackage{amsfonts}
\usepackage{latexsym}
\usepackage{verbatim}
\usepackage{amsmath}    

\marginparwidth 0pt
\oddsidemargin  0pt
\evensidemargin  0pt
\marginparsep 0pt

\topmargin   -.5in
\hoffset -0.1in
\textwidth   6.7in
\textheight  8.5 in

\newcommand{\beq}{\begin{equation}}
\newcommand{\eeq}{\end{equation}}
\newcommand{\beqa}{\begin{eqnarray}}
\newcommand{\eeqa}{\end{eqnarray}}
\newcommand{\beqas}{\begin{eqnarray*}}
\newcommand{\eeqas}{\end{eqnarray*}}
\newcommand{\bi}{\begin{itemize}}
\newcommand{\ei}{\end{itemize}}
\newcommand{\ba}{\begin{array}}
\newcommand{\ea}{\end{array}}

\newcommand{\nn}{\nonumber}

\setcounter{page}{1}

\def\eqnok#1{(\ref{#1})}

\def\vgap{\vspace*{.1in}}

\def\setU{{X}}
\def\hu{{u^*}}

\def\tu{{\tilde x}}

\newcommand{\bbr}{\Bbb{R}}
\def\w{\omega}

\def\VI{{\rm VI}}

\title{
On the Convergence Properties of Non-Euclidean\\
Extragradient Methods for Variational Inequalities with\\
 Generalized Monotone Operators
\thanks{The author of this paper was partially supported by
    NSF grants CMMI-1000347 and DMS-1319050,
    ONR grant N00014-13-1-0036, and
    NSF CAREER Award CMMI-1254446.}
}


\date{April 12, 2012}
\author{
     Cong D. Dang
    \thanks{Department of Industrial and Systems
    Engineering, University of Florida, Gainesville, FL, 32611.
    (email: {\tt congdd@ufl.edu}).}
    \and
    Guanghui Lan
    \thanks{Department of Industrial and Systems
    Engineering, University of Florida, Gainesville, FL, 32611.
    (email: {\tt glan@ise.ufl.edu}). }
}


\begin{document}

\maketitle

\begin{abstract}
In this paper, we study a class of generalized monotone variational inequality (GMVI) problems
whose operators are not necessarily monotone (e.g., pseudo-monotone).
We present non-Euclidean extragradient (N-EG) methods for computing 
approximate strong solutions of these problems, and demonstrate how their iteration complexities
depend on the global Lipschitz or H\"{o}lder continuity properties for their operators and the smoothness 
properties for the distance generating function used in the N-EG algorithms. We also 
introduce a variant of this algorithm by incorporating a simple line-search procedure 
to deal with problems with more general continuous operators. Numerical studies are conducted
to illustrate the significant advantages of the developed algorithms
over the existing ones for solving large-scale GMVI problems.

\vspace{.1in}

\noindent {\bf Keywords:} Complexity, Monotone variational inequality,
Pseudo-monotone variational inequality, Extragradient methods,
Non-Euclidean methods, Prox-mapping

\end{abstract}

\vspace{0.1cm}
\setcounter{equation}{0}
\section{Introduction} \label{sec_intro}
Variational inequality (VI) has been widely studied in the
literature due to its encompassing power of describing a wide range
of optimization, equilibrium and complementarity problems (see
\cite{FacPang03} and references therein). Given an
nonempty closed convex set $X\subseteq \bbr^n$ and a continuous
mapping $F:X \to \bbr^n$, the variational inequality problem,
denoted by $\VI(X,F)$, is to find $x^* \in X$ satisfying
\beq 
\label{def_VI} \langle F(x^*), x - x^* \rangle \ge 0 \ \ \ \forall
\, x \in X.
\eeq
Such a point $x^*$ is often called a {\sl strong
solution} of $\VI(X,F)$.

The extragradient method, initially proposed by
Korpelevich~\cite{Kor76}, is a classical method for solving VI
problems. It improves the usual gradient projection method
(e.g.,~\cite{Sibony70,Bert99}) by performing an additional metric
projection step at each iteration. While earlier studies on
extragradient methods were focused on their asymptotical convergence
analysis (see, e.g., \cite{SolSva99-1,Sun95-1,tseng00-1}), much
recent effort has been directed to the complexity analysis of these
types of methods. In particular, Nemirovski~\cite{Nem05-1} presented
a generalized version of Korpelevich's extragradient method and
analyzed its iteration complexity in terms of the computation of a
{\sl weak solution}, i.e., a point $x^* \in X$ such that \beq
\label{def_weak} \langle F(x), x - x^* \rangle \ge 0 \ \ \ \forall
\, x \in X. \eeq Note that if $F(\cdot)$ is monotone and continuous,
a weak solution of $\VI(X,F)$ must be a strong solution and vise
versa. Moreover, he showed that one can possibly improve the
complexity results by employing the non-Euclidean projection
(prox-mapping) steps (see \eqnok{s29}) in place of the two metric
projection steps in Korpelevich's extragradient method. These types of methods
are referred to as {\sl non-Euclidean extragradient (N-EG)} methods
in this paper. Similar results have also been developed by Auslender and
Teboulle~\cite{AuTe05-1} for their interior projection methods
applied to monotone VI problems. More recently, Monteiro and
Svaiter~\cite{MonSva09-1} established the complexity for a class of
hybrid proximal extragradient methods ~\cite{SS99-1} which covers
Korpelevich's extragradient method as a special case.
Other approaches and their associated rate of convergence
for solving VI problems have also been studied (see, e.g., \cite{Nest07-2,KoNeSh11-1}).
Note that in all these previous studies in \cite{Nem05-1,AuTe05-1,MonSva09-1,Nest07-2,KoNeSh11-1},
the operator $F(\cdot)$ is assumed to be monotone.

In this paper, we consider a more general class of VI problems for which the
operator $F(\cdot)$ is not necessarily monotone. In particular, we make the
following much weaker assumption about
the monotonicity of $\VI(X,F)$, i.e., relation \eqnok{def_weak}
holds for any strong solution $x^* \in X$ (e.g., \cite{HaPan90-1}).
This class of VI problems, refereed to as {\sl generalized monotone
variational inequalities} (GMVI), cover both monotone and
pseudo-monotone VI problems. It is also not difficult to construct
GMVI problems whose operators are neither monotone nor
pseudo-monotone (c.f., \eqnok{simple_ex}). However, to the best of our
knowledge, there does not exist any complexity results for the
extragradient methods applied to GMVI problems in the literature. In particular, 
the previous complexity studies conducted for VI~\cite{Nem05-1,AuTe05-1,MonSva09-1,Nest07-2,KoNeSh11-1} relies 
on the monotonicity assumption of the operator $F(\cdot)$ and hence 
are not applicable for the possibly non-monotone 
GMVI problems. Moreover, most of the previous complexity analysis has been
conducted for computing a weak solution approximately satisfying
\eqnok{def_weak}, and there exists very few complexity results for computing
approximate strong solutions (see~\cite{MonSva09-1}).
In particular, if our goal is to compute an approximate strong solution, 
it was unclear, even for
monotone VI problems, how the N-EG method will be more advantageous
over Korpelevich's extragradient method.

The main goal of this paper is to present a generalization of
the N-EG method in~\cite{Nem05-1}
for solving GMVI problems and discuss its convergence properties
in terms of the computation of approximate strong solutions.
Our major contributions are summarized as follows.
Firstly, we present a new termination criterion based on the residual function
associated with the prox-mapping, and discuss its relations with a few other
possible notions of approximate strong solutions to $\VI(X,F)$.
In particular, we show that under certain conditions, if a point $x\in X$
has a small residual, it must be associated with an approximate strong solution
$y \in X$ with a small optimality gap $g(y)$, where 
\[
g(y) = \max_{z \in X}
\langle F(y), y - z\rangle.
\]
We also show how this termination criterion is related to the notion of
an approximate strong solution recently proposed by Monteiro and Svaiter~\cite{MonSva09-1}
to deal with VI problems with unbounded $X$.

Secondly, we study the complexity of the N-EG method for solving GMVI
problems whose operator $F(\cdot)$ satisfies certain
global continuity assumptions.
In particular, by employing a novel analysis, we show that, if $F(\cdot)$ is
Lipschitz continuous, then the N-EG method applied to GMVI problems can generate
a solution $y_k \in X$ with $g(y_k)$ bounded by
\[
{\cal O}(1) \left(\frac{L (\alpha + {\cal Q}) \Omega^2_{\w,X}}{\alpha \sqrt{k}}\right).
\]
Here, ${\cal O}(1)$ denotes an absolute constant,
$L$ is the Lipschitz constant of $F$, $\alpha$ and
${\cal Q}$ are certain constants of the distance generating function $\w(\cdot)$
used to define the prox-mapping, and $\Omega^2_{\w,X}$ is
a characteristic constant depending on $\w(\cdot)$ and $X$.
We also 
consider GMVI problem with H\"{o}lder continuous
operators and show that the N-EG method possesses an
${\cal O}(1/k^{\nu/2})$ rate of convergence for solving this class of problems,
where $\nu \in (0,1]$ denotes the level of
continuity.
Our development also improves an existing result for computing an approximate strong
solution for Lipschitz and monotone VI with unbounded $X$ by removing
a purification procedure introduced by Monteiro and Svaiter in \cite{MonSva10-3}.

Thirdly, in order to deal with more general GMVI problems whose operators 
are not necessarily H\"{o}lder continuous, we present a variant of
N-EG method by incorporating a simple line-search procedure (N-EG-LS) and show that
it can generate a sequence of solutions converging to
a strong solution of $\VI(X,F)$.
It should be noted that, while earlier extragradient type methods
for GMVI problems with a general continuous operator
(e.g., Solodov and Svaiter~\cite{SolSva99-1}, Sun~\cite{Sun95-1}) rely on a
certain monotonicity property of the metric
projection (e.g., Gafni and Bertsekas \cite{GafBer84-1}),
such a property is not assumed by the prox-mapping in general.
We present certain sufficient conditions on the prox-mapping 
which can guarantee the convergence of the N-EG-LS algorithm.
More specifically, we show that these conditions are satisfied by
the prox-mapping induced by distance generating functions 
with Lipschitz continuous gradients.

Finally, we present promising numerical results for the developed
N-EG methods for solving GMVI problems. In particular, we
demonstrate that the N-EG-LS method can be more advantageous over
N-EG method if the Lipschitz constant is big or unknown. Moreover,
we show that the N-EG methods with a properly chosen distance
generating function $\w(\cdot)$ can outperform the
Euclidean methods especially when the dimension
$n$ is big.

This paper is organized as follows. In Section~\ref{sec_problem},
we describe in more details the GMVI problems. We discuss
the prox-mapping and the termination criterion associated with
the prox-mapping in Section~\ref{sec_term}. Then we present
the N-EG method for solving GMVI problems with Lipschitz or
 H\"{o}lder continuous operators in Section~\ref{sec_basic_alg}.
 Section~\ref{sec_adv_alg} is devoted to the N-EG-LS method
 applied to GMVI problems with general continuous operators.
 Finally, numerical results are presented in Section~\ref{sec_num}.

\vgap

\subsection{Notation and terminology} \label{notation-sco}
let $\bbr^n$ be an arbitrary finite dimensional vector space endowed with
the inner product $\langle \cdot, \cdot \rangle$, $\|\cdot\|$ denote
a norm in $\bbr^n$ (not necessarily the one associated with the inner product),
and $\|\cdot\|_*$ denotes its conjugate norm. Let $X \subseteq \bbr^n$ be closed
and convex. A function $f: X \to \bbr$ is said to have $L$-Lipschitz
continuous gradient if it is differentiable and
\[
\|\nabla f(x) - \nabla f(y)\|_* \le L \|x - y\|, \forall x, y \in X.
\]

For a given $m \times n$ real-valued matrix $A$, letting $\|A\|_2$ be the spectral
norm and $\|A\|_{\max} = \max_{ij}\{|A_{ij}|\}$, we have
\beq \label{rel_norms}
\|A\|_{\max} \le \|A\|_2 \le \sqrt{mn} \|A\|_{\max}.
\eeq

We use $\mathbb{N}$ to denote the set of natural numbers.

\section{The problem of interest} \label{sec_problem}
Given an nonempty closed convex set $X\subseteq \bbr^n$ and
a continuous mapping $F:X \to \bbr^n$, the problem of interest
in this paper is find a strong solution $x^*$ of $\VI(X,F)$,
i.e., a vector $x^* \in X$ such that \eqnok{def_VI}
holds.
In this paper, we assume that the solution set $X^*$ of $\VI(X,F)$
is nonempty. Moreover, the following assumption is made throughout the paper.

\vgap

\noindent {\bf{A1}} For any
$x^* \in X^*$ we have
\beq \label{monotone}
\langle F(x), x - x^* \rangle \ge 0 \ \ \ \forall \, x \in X.
\eeq

\vgap

Clearly, Assumption A1 is satisfied if $F(\cdot)$ is monotone, i.e.,
\beq \label{mon_true} \langle F(x) - F(y), x - y \rangle \ge 0 \ \ \
\forall \, x, y \in X. \eeq Moreover, this assumption holds if
$F(\cdot)$ is pseudo-monotone, i.e., \beq \label{mon_true_p} \langle
F(y), x - y \rangle \ge 0  \ \ \Longrightarrow \ \ \langle F(x), x -
y \rangle \ge 0. \eeq As an example, $F(\cdot)$ is pseudo-monotone if
it is the gradient of a real-valued differentiable pseudo-convex
function. It is also not difficult to construct VI problems that
satisfy \eqnok{monotone}, but their operator $F(\cdot)$ is neither
monotone nor pseudo-monotone anywhere (see \cite{SolSva99-1} and
references therein). One set of simple examples are given by all the
functions $F: \bbr \to \bbr$ satisfying \beq \label{simple_ex} F(x)
\left\{
\begin{array}{ll}
= 0, & x = x_0;\\
\ge 0, & x \ge x_0;\\
\le 0,& x \le x_0.
\end{array}
\right. \eeq These problems, although satisfying Assumption A1 with
$x^* = x_0$, can be neither monotone nor pseudo-monotone.

For future reference, we say that $\VI(X,F)$ is a generalized monotone VI (GMVI) problem
whenever Assumption A1 is satisfied.

The condition given by \eqnok{def_VI} is the standard definition of
a {\sl strong solution} to $\VI(X,F)$. Recall that
a solution $x^*$ satisfying \eqnok{monotone} is
usually called is a {\sl weak solution} to $\VI(X,F)$.
Clearly, under our assumption,  a strong solution
must be a weak solution for the GMVI problems. The inverse is also true 
if $F(\cdot)$ is continuous and monotone. However, such a relation does not 
necessarily hold when $F(\cdot)$ is not
monotone and hence the computation of an approximate weak solution is not particularly
useful in these cases. In addition, as pointed out by Monteiro and
Svaiter \cite{MonSva09-1}, a strong solution to $\VI(X,F)$ admits
certain natural explanations for some important classes of monotone VI
problems, e.g., the complementarity problems. This paper focuses on
the computation of approximate strong solutions to $\VI(X,F)$ (see
Section~\ref{sec_term}).

Depending on the continuity properties of $F(\cdot)$,
we consider the following four different classes of VI problems.
\begin{itemize}
\item [i)] $F(\cdot)$ is Lipschitz continuous:
\beq \label{smooth}
\|F(x) - F(y)\|_* \le L \|x -y\|, \ \ \ \forall \, x, y \in X;
\eeq
\item [ii)] $F(\cdot)$ is H\"{o}lder continuous: for some $\nu \in (0,1]$,
\beq \label{smooth1}
\|F(x) - F(y)\|_* \le L \|x-y\|^\nu, \ \ \ \forall \, x, y \in X;
\eeq
\item [iii)] $F(\cdot)$ is locally Lipschitz continuous:
for every $x \in X$, there exists a neighborhood $B_x$ of $x$, such that
\beq \label{smooth2}
\|F(x) - F(y)\|_* \le L \|x - y\|, \ \ \ \forall \, x, y \in B_x \subset X;
\eeq
\item [iv)] $F(\cdot)$ is continuous:
\beq \label{smooth3}
\lim_{y \to x} \|F(x) - F(y)\|_* = 0, \ \ \ \forall x \in X.
\eeq
\end{itemize}
Clearly, if $F(\cdot)$ is Lipschitz continuous, then it is
H\"{o}lder continuous with $\nu = 1$. Moreover, in view of the
assumption that $\nu >0$, a H\"{o}lder continuous $F(\cdot)$ must be
continuous but not vise versa. In addition, a locally Lipschitz
continuous $F(\cdot)$ must be continuous but the inverse is not
necessarily true. In Sections~\ref{sec_basic_alg} and
\ref{sec_adv_alg},
we will present algorithms for solving different classes of VI problems
and show how their convergence properties depend on the continuity assumption of $F(\cdot)$.

\section{Prox-mapping and termination criteria} \label{sec_term}
In this section, we discuss the main computational construct, i.e., the prox-mapping,
that will be used in the non-Euclidean extragradient methods. We also present a termination criterion based
on the prox-mapping and show how it relates to some other termination criteria
for solving VI problems. It is worth noting that the results in this section
does not require Assumption A1.

\subsection{Distance generating function and prox-mapping}
We review the concept of prox-mapping (e.g., \cite{Nem05-1,AuTe06-1,NJLS09-1}) in this subsection.

A function $\w:\,X\to \bbr$ is called a {\em distance generating function} modulus $\alpha>0$
with respect to $\|\cdot\|$, if the following conditions hold: i) $\w(\cdot)$ is convex and
continuous on $X$; ii) the set
\[
\begin{array}{ll}
X^o=\left \{x\in X:\partial w(x) \neq \emptyset \right \}
\end{array}
\]
is convex (note that $X^o$ always contains the relative interior of $X$); and
iii) restricted to $X^o$, $\w(\cdot)$ is continuously
differentiable and strongly convex with parameter
$\alpha$ with respect to  $\|\cdot\|$, i.e.,
\begin{equation}\label{s27}
\langle \nabla \w(x')-\nabla\w(x), x'-x \rangle\geq\alpha
\|x'-x\|^2,\;\;\forall x',x\in X^o .
\end{equation}

Given a distance generating function $\w$,
the {\em prox-function} $V:X^o\times X\to\bbr_+$ is defined by
\begin{equation}\label{s450}
V(x,z)=\w(z)-[\w(x)+ \langle \nabla \w(x), z-x \rangle].
\end{equation}
The function $V(\cdot,\cdot)$ is also called the Bregman's distance,
which was initially studied by Bregman \cite{Breg67} and later by many others
(see \cite{AuTe06-1,BBC03-1,Kiw97-1,Teb97-1} and references therein).
In this paper, we assume that the prox-function
$V(x,z)$ is chosen such that, for a given $x \in X^o$, the {\em prox-mapping} $P_x: \bbr^n \to \bbr^n$
defined as
\begin{equation}\label{s29}
P_x(\phi)=\arg\min_{z\in X}\big\{\langle \phi, z \rangle+V(x,z)\big\}
\end{equation}
is easily computable.
It can be seen from the strong convexity of $\w(\cdot)$
and \eqnok{s450} that
\beq \label{bnd_v}
 V(x,z) \ge \frac{\alpha}{2} \|x - z\|^2, \ \ \ \forall x, z \in X.
\eeq
In some cases, we assume that the distance generating function $\w(\cdot)$ satisfies
\beq \label{smoothomega}
\|\nabla \w(x) - \nabla \w(z)\|_* \le {\cal Q} \|x-z\|, \ \ \forall x, z \in X,
\eeq
for some ${\cal Q} \in (0,\infty)$. Under this assumption, it can be easily seen that (see, e.g.,
Lemma 1.2.3 of \cite{Nest04})
\beq \label{quad_growth}
V(x, z) \le \frac{{\cal Q}}{2} \|x - z\|^2, \ \ \ \forall x, z \in X.
\eeq
We say that the prox-function $V(\cdot, \cdot)$ is growing
quadratically whenever condition \eqnok{quad_growth} holds.





\vgap

Proposition \ref{examples} below provides a few examples for the selection of $\|\cdot\|$ and
distance generating function $\w(\cdot)$. 
More such examples can be found, for example, in \cite{AuTe06-1,BenNem00,nemyud:83}.

\begin{proposition} \label{examples}
\begin{itemize}
\item [a)] If $X = \bbr^n$, $\|\cdot\| = \|\cdot\|_2$ and $\w(x) = \|x\|_2^2/2$, then we have
$\alpha = {\cal Q}=1$.
\item [b)] If $X = \{ x \in \bbr^n: \sum_{i=1}^n x_i = 1, x_i \ge 0, i = 1, \ldots, n\}$, $\|\cdot\| = \|\cdot\|_1$
and $\w(x) = \sum_{i=1}^n (x_i+\delta/n) \log (x_i+\delta/n)$ with $\delta = 10^{-16}$,
then we have $\alpha = {\cal O}(1)$ and ${\cal Q} = 1 + n /\delta$. Here ${\cal O}(1)$
denotes an absolute constant.
\item [c)] If $X = \{x \in \bbr^n: \|x\|_1 \le 1\}$, where
$\|x\|_1 = \sum_{i=1}^n |x_i|$, $\|\cdot\| = \|\cdot\|_1$, and
\beq \label{lpnorm}
\w(x) = \frac{1}{2}\|x\|_p^2 = \frac{1}{2} \left(\sum_{i=1}^n |x_i|^p \right)^\frac{2}{p}
\eeq
with $p = 1 + 1 / \ln n$, then we have $\alpha = {\cal O}(1)(1/\ln n)$.
\end{itemize}
\end{proposition}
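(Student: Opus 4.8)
The plan is to verify each of the three cases by direct computation, reading off the strong-convexity modulus $\alpha$ from the gradient-monotonicity characterization \eqnok{s27} (equivalently, from the prox-function bound \eqnok{bnd_v}) and the constant ${\cal Q}$ from the Lipschitz condition \eqnok{smoothomega} on $\nabla\w$. Part (a) is immediate: with $\w(x)=\|x\|_2^2/2$ we have $\nabla\w(x)=x$, so $\langle\nabla\w(x')-\nabla\w(x),x'-x\rangle=\|x'-x\|_2^2$ and $\|\nabla\w(x)-\nabla\w(z)\|_*=\|x-z\|_2$; since the $\ell_2$ norm is self-dual, both relations hold with constant $1$, giving $\alpha={\cal Q}=1$.

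For the entropy case (b), I would compute $\nabla\w(x)_i=\log(x_i+\delta/n)+1$ and $\nabla^2\w(x)=\Diag\big(1/(x_i+\delta/n)\big)$. To obtain $\alpha$ I would show $h^\top\nabla^2\w(x)h\ge\alpha\|h\|_1^2$ for every direction $h$ tangent to the simplex. By Cauchy--Schwarz,
\[
\|h\|_1^2=\Big(\sum_i\frac{|h_i|}{\sqrt{x_i+\delta/n}}\sqrt{x_i+\delta/n}\Big)^2\le\Big(\sum_i\frac{h_i^2}{x_i+\delta/n}\Big)\Big(\sum_i(x_i+\delta/n)\Big),
\]
and since $\sum_i(x_i+\delta/n)=1+\delta$ on the simplex, this gives $h^\top\nabla^2\w(x)h\ge\|h\|_1^2/(1+\delta)$, i.e. $\alpha=1/(1+\delta)={\cal O}(1)$. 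For ${\cal Q}$ the conjugate of $\|\cdot\|_1$ is $\|\cdot\|_\infty$, so I would estimate $|\nabla\w(x)_i-\nabla\w(z)_i|=|\log(x_i+\delta/n)-\log(z_i+\delta/n)|$ by the mean value theorem; as both arguments lie in $[\delta/n,\,1+\delta/n]$, the derivative $1/t$ of $\log$ is at most $n/\delta$, so $|\nabla\w(x)_i-\nabla\w(z)_i|\le(n/\delta)|x_i-z_i|$. Taking the maximum over $i$ and using $\|x-z\|_\infty\le\|x-z\|_1$ yields ${\cal Q}\le n/\delta\le 1+n/\delta$, the claimed value.

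The $\ell_p$ case (c) rests on the known fact that $\half\|\cdot\|_p^2$ is strongly convex with modulus $p-1$ with respect to $\|\cdot\|_p$ for $p\in(1,2]$ (see, e.g., \cite{BenNem00,nemyud:83}), which I would invoke as the single nonelementary ingredient; the same computation giving \eqnok{bnd_v} then yields $V(x,z)\ge\frac{p-1}{2}\|x-z\|_p^2$. Converting from $\|\cdot\|_p$ to $\|\cdot\|_1$ via the H\"older inequality $\|u\|_1\le n^{1-1/p}\|u\|_p$, I obtain
\[
V(x,z)\ge\frac{p-1}{2}\,n^{-2(1-1/p)}\|x-z\|_1^2,
\]
so that $\alpha=(p-1)\,n^{-2(1-1/p)}$. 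Substituting $p=1+1/\ln n$ gives $p-1=1/\ln n$ and $1-1/p=1/(\ln n+1)$, and using the identity $n^{1/\ln n}=e$ one finds $n^{-2(1-1/p)}=e^{-2\ln n/(\ln n+1)}\ge e^{-2}$, an absolute constant. Hence $\alpha={\cal O}(1)(1/\ln n)$, as claimed.

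The only genuinely nontrivial step is the $(p-1)$-strong convexity of $\half\|\cdot\|_p^2$ with respect to $\|\cdot\|_p$ in part (c); everything else reduces to Cauchy--Schwarz, the mean value theorem, and elementary norm equivalences. I expect the main care to be needed in tracking the dimension-dependent factor $n^{-2(1-1/p)}$ and confirming, through $n^{1/\ln n}=e$, that the choice $p=1+1/\ln n$ keeps it bounded below by an absolute constant rather than decaying with $n$ --- this is precisely what makes the $\ell_1$-ball example attractive in high dimension.
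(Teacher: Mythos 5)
Your proposal is correct, but it is considerably more self-contained than the paper's own proof, which consists entirely of citations: part a) is declared obvious, part b) is delegated to Chapter 5 of \cite{BenNem00}, and part c) to \cite{nemyud:83}. Your derivations fill in exactly what those references contain. For b), the Cauchy--Schwarz (Pinsker-type) argument $\|h\|_1^2 \le \bigl(\sum_i h_i^2/(x_i+\delta/n)\bigr)\bigl(\sum_i (x_i+\delta/n)\bigr)$ together with $\sum_i(x_i+\delta/n)=1+\delta$ on the simplex correctly gives $\alpha = 1/(1+\delta)$, and the mean-value-theorem bound on $\log$ over $[\delta/n,\,1+\delta/n]$ gives $\|\nabla\w(x)-\nabla\w(z)\|_\infty \le (n/\delta)\|x-z\|_1$, which in fact slightly sharpens the paper's stated ${\cal Q}=1+n/\delta$ to ${\cal Q}\le n/\delta$. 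For c), your route still rests on one imported fact --- the $(p-1)$-strong convexity of $\half\|\cdot\|_p^2$ with respect to $\|\cdot\|_p$ for $p\in(1,2]$ --- which is precisely what the paper outsources to \cite{nemyud:83}; but your explicit H\"older conversion $\|u\|_1\le n^{1-1/p}\|u\|_p$ and the verification via $1-1/p = 1/(\ln n+1)$ that $n^{-2(1-1/p)}\ge e^{-2}$ is an absolute constant is the key dimension-tracking computation, and you carry it out correctly. What the paper's approach buys is brevity and reliance on established sources; what yours buys is a proof a reader can check without consulting them, and it makes transparent why the choice $p=1+1/\ln n$ is exactly what prevents the modulus from decaying polynomially in $n$.
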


\begin{proof}
Part a) is obvious and part b) has been shown in Chapter 5 of \cite{BenNem00}.
Moreover, the strong convexity of $\w(x)$ in \eqnok{lpnorm} and the estimation
of its modulus (with $p = 1 + 1/ \ln n$) is shown in \cite{nemyud:83}.
\end{proof}


\vgap

The distance generating function $\w(\cdot)$ also gives rise to the following
characteristic entity that will be used frequently in our convergence analysis:
\beq \label{def_D}
D_{\w,X} := \sqrt{\max_{x \in X}\w(x)-\min_{x\in X} \w(x)}.
\eeq
Let $x_1$ be the minimizer of $\w$ over $X$. Observe that $x_1\in
X^o$, whence $\nabla w(x_1)$ is well defined and satisfies
$\langle \nabla\omega(x_1), x-x_1 \rangle \geq 0$ for all $x\in X$,
which combined with the strong convexity of $\w$ implies that
\begin{equation}\label{strong_h_e}
\frac{\alpha}{ 2}\|x-x_1\|^2\leq V(x_1,x)\leq
\omega(x)-\omega(x_1)\leq D_{\w,X}^2,\;\;\forall x\in X,
\end{equation}
and hence
\begin{equation}\label{dist_x}
\|x-x_1\|\leq \Omega_{\w,X}:=\sqrt{\frac{2}{\alpha}}D_{\w,X} \, \,
\mbox{and} \, \,
\|x - x'\| \le 2 \Omega_{\w,X}, \,\,\forall x, x' \in X.
\end{equation}

\subsection{A termination criterion based on the prox-mapping} \label{sec_def_res}
In this subsection, we introduce a termination criterion for solving VI
associated with the prox-mapping.

We first provide a simple characterization of a strong solution
to $\VI(X,F)$.

\begin{lemma} \label{optcond}
A point $x \in X$ is a strong solution of $\VI(X,F)$ if and only if
\beq \label{fixed}
x = P_{x}(\gamma F(x))
\eeq
for some $\gamma > 0$.
\end{lemma}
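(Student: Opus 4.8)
The plan is to prove both directions using the first-order optimality conditions for the prox-mapping. The key tool is that $x = P_x(\gamma F(x))$ means $x$ solves $\min_{z \in X}\{\langle \gamma F(x), z\rangle + V(x,z)\}$. Since $V(x,\cdot)$ is differentiable on $X^o$ with $\nabla_z V(x,z) = \nabla\w(z) - \nabla\w(x)$ (by \eqnok{s450}), and since $x$ itself gives $\nabla_z V(x,z)|_{z=x} = 0$, the gradient of the objective at $z = x$ is exactly $\gamma F(x)$. So I would first establish the general variational characterization: $x = P_x(\phi)$ if and only if $\langle \phi + \nabla\w(x) - \nabla\w(x), z - x\rangle \ge 0$ for all $z \in X$, which collapses to $\langle \phi, z - x\rangle \ge 0$ for all $z \in X$.

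For the forward direction (assuming \eqnok{fixed}), I would apply this optimality condition with $\phi = \gamma F(x)$: setting $u = P_x(\gamma F(x)) = x$ and writing the standard inequality for the constrained minimizer gives $\langle \gamma F(x), z - x\rangle \ge 0$ for all $z \in X$. Dividing by $\gamma > 0$ yields $\langle F(x), z - x\rangle \ge 0$ for all $z \in X$, which is precisely \eqnok{def_VI}, so $x$ is a strong solution.

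For the converse, suppose $x$ is a strong solution, so $\langle F(x), z - x\rangle \ge 0$ for all $z \in X$. Fix any $\gamma > 0$ and let $u := P_x(\gamma F(x))$. The objective $\langle \gamma F(x), z\rangle + V(x,z)$ is strongly convex in $z$ (since $V(x,\cdot)$ inherits strong convexity with modulus $\alpha$ from $\w$, cf. \eqnok{bnd_v}), so the minimizer $u$ is unique; it therefore suffices to show $x$ attains the minimum. I would verify the optimality inequality $\langle \gamma F(x) + \nabla\w(u) - \nabla\w(x), z - u\rangle \ge 0$ is satisfied at $u = x$: plugging in $u = x$ makes the $\nabla\w$ terms cancel, leaving $\langle \gamma F(x), z - x\rangle \ge 0$, which holds by the strong-solution hypothesis. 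Hence $x$ is the (unique) minimizer, i.e. $x = P_x(\gamma F(x))$.

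The main subtlety is the careful handling of the optimality condition for $P_x$ on $X^o$ versus all of $X$: one must ensure $x \in X^o$ so that $\nabla\w(x)$ is well defined, and confirm that the subgradient/gradient inequality for the constrained minimization over $X$ is correctly stated. Since any point we are testing as a solution lies in $X$ and the prox-mapping is assumed well defined there, this is a technicality rather than a genuine obstacle; the cancellation of the $\nabla\w$ terms at $z = x$ is the crux that makes both directions immediate, and uniqueness from strong convexity is what lets the converse conclude equality rather than merely that $x$ is \emph{a} minimizer.
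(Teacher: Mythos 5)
Your proof is correct and follows essentially the same route as the paper: both directions rest on the first-order optimality condition of the prox-mapping problem \eqnok{s29}, with the cancellation of the $\nabla\w$ terms at $z=x$ doing the work. The only difference is that you spell out the converse (sufficiency of the variational inequality for the strongly convex prox subproblem, plus uniqueness of its minimizer), which the paper compresses into ``easily follows from the optimality condition of \eqnok{s29}.''
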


\begin{proof}
If \eqnok{fixed} holds, then
by the optimality condition of \eqnok{s29}, i.e.,
\beq \label{optimality}
\langle \gamma F(x) + \nabla \w (P_{x}( \gamma F(x))) - \nabla \w( x), z - P_{x}(\gamma F(x)) \rangle \ge 0,
\ \ \ \forall z \in X,
\eeq
we have $\langle \gamma F(x), z - x \rangle \ge 0$ for any $z \in X$, which, in view of
the fact that $\gamma > 0$ and definition \eqnok{def_VI}, implies
that $x$ is a strong solution of $\VI(X,F)$. The ``only if'' part of the statement
easily follows from the optimality condition of \eqnok{s29}.
\end{proof}

\vgap

Motivated by Lemma \ref{optcond}, we can define the
residual function for a given $x \in X$ as follows.

\vgap

\begin{definition} \label{def_solution}
Let $\|\cdot\|$ be a given norm in $\bbr^n$, $\w(\cdot)$ be a distance generating function
modulus $\alpha > 0$ w.r.t. $\|\cdot\|$ and $P_x(\cdot)$
be the prox-mapping defined in \eqnok{s29}. Then, for
some positive constant $\gamma$, we define the residual
$R_\gamma(\cdot)$ at the point $x \in X$ as
\beq \label{def_res}
R_\gamma(x) :=
\frac{1}{\gamma} \left[ x - P_{x}(\gamma F(x))
\right].
\eeq
\end{definition}

Observe that in the Euclidean setup where $\|\cdot\| = \|\cdot\|_2$ and
$\w(x) = \|x\|_2^2 /2$, the residual $R_\gamma(\cdot)$ in \eqnok{def_res} reduces to
\beq \label{res_eu}
R_\gamma(x) = \frac{1}{\gamma}
\left[x - \Pi_X(x - \gamma F(x)) \right],
\eeq
where $\Pi_X(\cdot)$ denotes the metric projection over $X$.
Such a residual function in \eqnok{res_eu} has been used
in the asymptotic analysis of different algorithms for solving VI problems (e.g.,
\cite{SolSva99-1,Sun95-1}). In particular, if $F(\cdot)$ is the gradient of
a real-valued differentiable function $f(\cdot)$, the residual $R_\gamma(\cdot)$ in
\eqnok{res_eu} corresponds to the well-known projected gradient of $f(\cdot)$ at
$x$ (see, e.g., \cite{LanMon08-1,LanMon09-1,Nest04}).

The following two results are immediate consequences of
Lemma~\ref{optcond} and Definition~\ref{def_solution}.

\begin{lemma} A point $x\in X$ is a strong solution of $\VI(X,F)$ if and only if
$\|R_\gamma(x)\| = 0$ for some $\gamma > 0$;
\end{lemma}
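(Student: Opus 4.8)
The plan is to obtain this statement as a one-line chain of equivalences, since it simply combines the fixed-point characterization of Lemma~\ref{optcond} with the definition of the residual in \eqnok{def_res}. The only genuine ingredient beyond those two facts is the positive-definiteness of a norm, and I would make that explicit.

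First I would fix a constant $\gamma > 0$ and use the fact that $\|\cdot\|$ is a norm to rewrite the hypothesis $\|R_\gamma(x)\| = 0$ as $R_\gamma(x) = 0$; this is exactly the point where definiteness (not merely homogeneity) is used. Next, substituting the definition \eqnok{def_res}, I would note that
\beq
R_\gamma(x) = \frac{1}{\gamma}\left[x - P_x(\gamma F(x))\right] = 0
\eeq
is equivalent, after multiplying through by $\gamma > 0$, to the fixed-point relation $x = P_x(\gamma F(x))$, i.e.\ condition \eqnok{fixed}.

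Finally I would invoke Lemma~\ref{optcond}, which asserts that $x \in X$ is a strong solution of $\VI(X,F)$ if and only if \eqnok{fixed} holds for some $\gamma > 0$. Chaining the equivalences gives that $\|R_\gamma(x)\| = 0$ for some $\gamma > 0$ precisely when $x$ is a strong solution, as claimed. I would take care to carry the quantifier ``for some $\gamma > 0$'' consistently through both directions, since it appears identically in the hypothesis, in \eqnok{fixed}, and in the conclusion, so no extra argument about $\gamma$-independence is needed.

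I do not anticipate any real obstacle: the content has already been established in Lemma~\ref{optcond}, and the residual in Definition~\ref{def_solution} was constructed exactly so that its vanishing encodes the fixed-point condition. The statement is therefore an immediate consequence, and the ``proof'' amounts to recording the equivalences above.
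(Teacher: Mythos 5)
Your proposal is correct and matches the paper's intent exactly: the paper states this lemma without proof, calling it an ``immediate consequence of Lemma~\ref{optcond} and Definition~\ref{def_solution},'' and your chain of equivalences (norm definiteness, multiplying by $\gamma>0$ to recover the fixed-point relation \eqnok{fixed}, then invoking Lemma~\ref{optcond} with the quantifier ``for some $\gamma>0$'' carried through) is precisely that immediate argument spelled out.
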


\begin{lemma} \label{asy_con}
Suppose that $x_k \in X$ and $\gamma_k \in (0, \infty)$, $k = 1,2, \ldots$,
satisfy the following conditions:
\begin{itemize}
\item [i)] $\lim_{k\to \infty} V(x_k, P_{x_k}(\gamma_k F(x_k))) =0$;
\item [ii)] There exists $K \in \mathbb{N}$ and $\gamma^* > 0$ such that
$\gamma_k \ge \gamma^*$ for any $k \ge K$.
\end{itemize}
Then we have $\lim_{k \to \infty} \|R_{\gamma_k}(x_k)\| = 0$. If in
addition, the sequence $\{x_k\}$ is bounded, there exists an
accumulation point $\tilde x$ of $\{x_k\}$ such that $\tilde x \in
X^*$, where $X^*$ denotes the solution set of $\VI(X,F)$.
\end{lemma}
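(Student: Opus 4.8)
The plan is to dispatch the two assertions separately: first the vanishing of the residual, which is elementary, and then the existence of a solution among the accumulation points, which is the substantive part. For the residual, I would start from Definition~\ref{def_solution}, writing $\|R_{\gamma_k}(x_k)\| = \gamma_k^{-1}\|x_k - P_{x_k}(\gamma_k F(x_k))\|$. The lower bound \eqnok{bnd_v} on the prox-function gives $\|x_k - P_{x_k}(\gamma_k F(x_k))\|^2 \le (2/\alpha)\,V(x_k, P_{x_k}(\gamma_k F(x_k)))$, so condition~(i) forces $\|x_k - P_{x_k}(\gamma_k F(x_k))\| \to 0$. Condition~(ii) then supplies the uniform bound $\gamma_k^{-1} \le 1/\gamma^*$ for all $k \ge K$, and combining the two facts yields $\|R_{\gamma_k}(x_k)\| \le (\gamma^*)^{-1}\|x_k - P_{x_k}(\gamma_k F(x_k))\| \to 0$. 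This step is routine.

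For the second assertion, set $y_k := P_{x_k}(\gamma_k F(x_k))$. Boundedness of $\{x_k\}$ together with closedness of $X$ lets me extract a subsequence $x_{k_j} \to \tilde x \in X$; since $\|x_k - y_k\| \to 0$ from the previous step, the same subsequence satisfies $y_{k_j} \to \tilde x$. The heart of the argument is the optimality condition \eqnok{optimality} of the prox-mapping, which in the present notation reads $\langle \gamma_k F(x_k) + \nabla\w(y_k) - \nabla\w(x_k), z - y_k\rangle \ge 0$ for every $z \in X$. Dividing by $\gamma_k$ and rearranging gives, for all $z \in X$,
\[
\langle F(x_k), z - y_k\rangle \ge \frac{1}{\gamma_k}\langle \nabla\w(x_k) - \nabla\w(y_k), z - y_k\rangle.
\]
I would then fix $z \in X$, pass to the limit along $\{k_j\}$, and use continuity of $F$ to see that the left-hand side tends to $\langle F(\tilde x), z - \tilde x\rangle$; provided the right-hand side tends to zero, this yields $\langle F(\tilde x), z - \tilde x\rangle \ge 0$ for every $z \in X$, i.e.\ $\tilde x \in X^*$ by \eqnok{def_VI}.

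The main obstacle is controlling the term $\gamma_k^{-1}\langle \nabla\w(x_k) - \nabla\w(y_k), z - y_k\rangle$ in the limit. Under \eqnok{smoothomega} (Lipschitz continuity of $\nabla\w$) this is immediate: the generalized Cauchy--Schwarz inequality bounds it by $\gamma_k^{-1}{\cal Q}\,\|x_k - y_k\|\,\|z - y_k\|$, and since $\gamma_k^{-1} \le 1/\gamma^*$, $\|x_k - y_k\| \to 0$, and $\|z - y_{k_j}\| \to \|z - \tilde x\|$ is bounded, the whole expression vanishes along $\{k_j\}$; note that no upper bound on $\gamma_k$ is needed, so the possibility $\gamma_k \to \infty$ causes no difficulty. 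Absent \eqnok{smoothomega}, one must instead invoke continuity of $\nabla\w$ on $X^o$, which is delicate precisely when $\tilde x$ lies on the boundary of $X$, where $\nabla\w$ may blow up; this boundary behaviour is the one place where care is required, and I would either impose \eqnok{smoothomega} or argue that the candidate limit lies in $X^o$. Once the right-hand side is shown to vanish, the conclusion $\tilde x \in X^*$ follows from the arbitrariness of $z$.
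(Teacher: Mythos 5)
Your argument is correct and follows essentially the same route as the paper's proof: the first assertion via \eqnok{bnd_v} together with the lower bound $\gamma_k \ge \gamma^*$, and the second via extraction of a convergent subsequence, the prox-mapping optimality condition \eqnok{optimality}, and passage to the limit with the arbitrariness of $z$. The only difference is one of care rather than substance: where the paper simply invokes ``the continuity of $F(\cdot)$ and $\nabla\w(\cdot)$'' to dispose of the term $\frac{1}{\gamma_k}\langle \nabla\w(x_k)-\nabla\w(y_k), z-y_k\rangle$, you correctly observe that this step implicitly requires either \eqnok{smoothomega} or good behaviour of $\nabla\w$ at the accumulation point (which may lie on the boundary of $X$, where the gradient of an entropy-type distance generating function is unbounded), so your version makes explicit a hypothesis the paper leaves tacit.
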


\begin{proof}
Denote $y_k = P_{x_k}(\gamma_k F(x_k))$. It follows from
\eqnok{bnd_v} and condition i) that $\lim_{k \to \infty} \|x_k -
y_k\| = 0$. This observation, in view of Condition ii) and
Definition \ref{def_solution}, then implies that $\lim_{k\to \infty}
\|R_{\gamma_k}(x_k)\| = 0$. Moreover, if $\{x_k\}$ is bounded, there
exist a subsequence $\{\tilde x_i\}$ of $\{x_k\}$ obtained by
setting $\tilde x_i = x_{n_i}$ for $n_1 \le n_2 \le \ldots$, such
that $\lim_{i \to \infty} \|\tilde x_i - \tilde x\| = 0$. Let
$\{\tilde{y}_i\}$ be the corresponding subsequence in $\{y_k\}$,
i.e., $y_i = P_{x_{n_i}}(\gamma_{n_i} F(x_{n_i}))$, and $\tilde
\gamma_i = \gamma_{n_i}$. We have $\lim_{i \to \infty} \|\tilde x_i
- \tilde y_i\| = 0$. Moreover, by \eqnok{optimality}, we have
\[
\langle F(\tilde x_i) + \frac{1}{\tilde \gamma_i} \left[ \nabla \w(\tilde y_i) - \nabla \w(\tilde x_i) \right], z - \tilde y_i \rangle
\ge 0,
\ \ \forall z \in X, \, \forall i \ge 1.
\]
Tending $i$ to $+\infty$ in the above inequality, and using the continuity of $F(\cdot)$ and $\nabla \w(\cdot)$, and condition ii), we
conclude that $ \langle F(\tilde x), z - \tilde x \rangle \ge 0$ for any $z \in X$.
\end{proof}

\vgap

In the remaining part of this section, we relate the
residual $R_\gamma(\cdot)$ to a few other possible
termination criteria for solving $\VI(X,F)$.

Observe that, if the set $X$ is bounded, then
in view of definition \eqnok{def_VI}, one can measure the
inaccuracy of a solution $x \in X$ by the gap function
(see \cite{Hearn82-1,HaPan90-1} and references therein):
\beq \label{def_gap1}
g(x) := \sup_{z \in X} \langle F(x), x - z \rangle.
\eeq
It can be easily seen that $g(x) \ge 0$ for any $x \in X$
and that the point $x^* \in X$ is a strong solution
of $\VI(X,F)$ if and only if $g(x^*) = 0$.
Note also that the gap function in \eqnok{def_gap1} does not depend on
any algorithmic parameters, while the definition of the residual function
$R_\gamma(\cdot)$ in \eqnok{def_res}
depends on the selection of $\|\cdot\|$ and $\w(\cdot)$.



However, if $X$ is unbounded, then the gap function $g(\cdot)$ in
\eqnok{def_gap1} may not be well-defined.
To address this issue, Monteiro and Svaiter~\cite{MonSva09-1} suggested a generalization
of the gap function $g(\cdot)$ so as to deal with the unbounded feasible sets.
More specifically, they define
a new gap function $\tilde{g}(\cdot, \phi)$ as
\beq \label{def_gap3}
\tilde{g}(x, \phi) := \sup_{z \in X} \langle F(x) + \phi, x -z\rangle.
\eeq
Observe that there always exists a point $\phi$
(e.g., $\phi = - F(x)$) such that $\tilde{g}(x, \phi)$ is well-defined
for any $x \in X$. Accordingly, an approximate strong solution of $\VI(X,F)$
can be defined as follows.

\begin{definition} \label{def_solution1}
A point $x \in X$ is called an $(\epsilon, \delta)$-strong solution of
$\VI(X,F)$, if there exists some $\phi$ such that $\|\phi\|_* \le \epsilon$ and
$\tilde{g}(x,\phi) \le \delta$.
\end{definition}

\vgap

Observe that the above definition of an $(\epsilon, \delta)$-strong solution
of $\VI(X,F)$ relies on the selection of the norm $\|\cdot\|_*$ (and hence $\|\cdot\|$).
In~\cite{MonSva09-1}, Monteiro and Svaiter
focused on the Euclidean setup where $\|\cdot\| = \|\cdot\|_2$
and considered VI problems with monotone and Lipschitz continuous operator $F(\cdot)$.
Moreover, as pointed out in \cite{MonSva09-1},
the two tolerances, namely $\epsilon$ and $\delta$,
used in the definition of an $(\epsilon, \delta)$-strong
solution possess natural interpretations in the context of
complementarity problems. In particular, the following result
has been shown in Proposition 3.1 of \cite{MonSva09-1}.

\begin{proposition} \label{from_MS}
Assume that $X = K$, where $K$ is a nonempty closed convex cone
(and $K^*$ be its dual cone). Then, $x \in K$ is an $(\epsilon, \delta)$-strong solution of
$\VI(X,F)$ if and only if there exists $\phi \in K^*$ such that
\[
\|F(x) - \phi \|_* \le \epsilon, \ \ \
\langle x, \phi \rangle \le \delta.
\]
\end{proposition}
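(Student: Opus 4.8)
The plan is to reduce both directions to a single computation of the generalized gap $\tilde g(x,\phi)$ over the cone $K$, and then to pass between the perturbation $\phi$ of Definition~\ref{def_solution1} and the vector appearing in the proposition via the substitution $\phi \mapsto F(x)+\phi$.

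First I would record the behaviour of the supremum defining $\tilde g$ when $X=K$ is a cone. For an arbitrary $\psi \in \bbr^n$, write $\sup_{z\in K}\langle \psi, x-z\rangle = \langle \psi, x\rangle - \inf_{z\in K}\langle \psi, z\rangle$. Because $K$ is a cone, $0\in K$ and $tz\in K$ for every $t\ge 0$ and $z\in K$; hence the infimum equals $0$ if $\langle \psi,z\rangle\ge 0$ for all $z\in K$ (that is, $\psi\in K^*$) and equals $-\infty$ otherwise (scale any $z$ with $\langle\psi,z\rangle<0$ by letting $t\to\infty$). This yields the dichotomy
\[
\sup_{z\in K}\langle \psi, x-z\rangle =
\begin{cases}
\langle \psi, x\rangle, & \psi\in K^*,\\
+\infty, & \psi\notin K^*.
\end{cases}
\]
Taking $\psi = F(x)+\phi$ then shows that $\tilde g(x,\phi)$ is finite exactly when $F(x)+\phi\in K^*$, in which case $\tilde g(x,\phi)=\langle F(x)+\phi,\,x\rangle$.

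With this formula the two implications become bookkeeping. For the ``only if'' part, an $(\epsilon,\delta)$-strong solution supplies a $\phi$ with $\|\phi\|_*\le\epsilon$ and $\tilde g(x,\phi)\le\delta<+\infty$; finiteness forces $F(x)+\phi\in K^*$, so I would set $\phi':=F(x)+\phi$ and read off $\phi'\in K^*$, $\|F(x)-\phi'\|_*=\|\phi\|_*\le\epsilon$, and $\langle x,\phi'\rangle=\tilde g(x,\phi)\le\delta$. For the ``if'' part, given $\phi'\in K^*$ with $\|F(x)-\phi'\|_*\le\epsilon$ and $\langle x,\phi'\rangle\le\delta$, I would set $\phi:=\phi'-F(x)$, so that $\|\phi\|_*\le\epsilon$ and, since $F(x)+\phi=\phi'\in K^*$, the displayed formula gives $\tilde g(x,\phi)=\langle \phi',x\rangle\le\delta$; thus $x$ is an $(\epsilon,\delta)$-strong solution.

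The only genuine content lives in the cone dichotomy above; everything downstream is the substitution $\phi'=F(x)+\phi$. I expect the sole (mild) obstacle to be ruling out the case $\psi\notin K^*$ in the forward direction, which is precisely where the finiteness of the tolerance $\delta$ selects the $K^*$ branch — exactly the feature for which the boundedness-free gap $\tilde g$ was introduced.
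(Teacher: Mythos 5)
Your proof is correct and complete. Be aware, though, that the paper itself contains no proof of Proposition~\ref{from_MS}: it is imported verbatim as Proposition 3.1 of Monteiro and Svaiter~\cite{MonSva09-1}, so the comparison is with the cited source rather than with an internal argument. Your derivation is the natural, essentially standard one, and it is a genuine self-contained proof of what the paper only quotes: since $K$ is a nonempty closed convex cone, for any $\psi\in\bbr^n$ one has $\sup_{z\in K}\langle\psi,x-z\rangle=\langle\psi,x\rangle$ if $\psi\in K^*$ and $+\infty$ otherwise; applying this with $\psi=F(x)+\phi$ converts Definition~\ref{def_solution1} into exactly the two conditions of the proposition under the substitution $\phi'=F(x)+\phi$. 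You also correctly isolate the only point of substance: in the forward direction it is the finiteness of $\tilde g(x,\phi)\le\delta$ that forces $F(x)+\phi\in K^*$, i.e., selects the correct branch of the dichotomy --- precisely the feature for which the gap $\tilde g$ was designed. Two cosmetic remarks, neither affecting correctness: $0\in K$ follows from nonemptiness together with closedness of the cone (scale any point by $t\downarrow 0$), which is worth stating since some authors define cones only through strictly positive scalings; and $\|F(x)-\phi'\|_*=\|\phi\|_*$ tacitly uses $\|-\phi\|_*=\|\phi\|_*$, valid for any norm.
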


\vgap

In other words, the first tolerance $\epsilon$ measures the infeasibility
of $F(x)$ with respect to the dual cone while the second tolerance
$\delta$ measures the size of the complementarity slackness.

\vgap
In the following result we show some relations between the the gap functions defined in \eqnok{def_gap1} and
\eqnok{def_gap3} and the residual function $R_\gamma(\cdot)$ defined in \eqnok{def_res}.

\begin{proposition} \label{prop_relation}
Let $x \in X$ be given. Also, for some $\gamma > 0$, let us denote
\beqa
 x^+ &:=& P_x(\gamma F(x)), \label{def_xplus}\\
\phi_{\gamma}(x) &:=& F(x) - F(x^+) + \frac{1}{\gamma}\left[ \nabla \w(x^+) - \nabla \w(x)\right] \label{rel_errors}.
\eeqa
\begin{itemize}
\item [a)] We have $\tilde{g}(x^+,\phi_{\gamma}(x)) \le 0$ for any $x \in X$ and $\gamma > 0$;
\item [b)] If $\w(\cdot)$ has ${\cal Q}$-Lipschitz continuous gradients
w.r.t. $\|\cdot\|$ and $F(\cdot)$ is H\"{o}lder continuous, i.e., condition \eqnok{smooth1} holds
for some $\nu \in (0,1]$, then
\beq \label{bnd_gap0}
\|\phi_{\gamma}(x)\|_* \le L \left[\gamma \|R_\gamma(x)\|\right]^\nu + {\cal Q} \|R_\gamma(x)\|;
\eeq
\item [c)] If, in addition, the set $X$ is bounded, then
\beq \label{bnd_gap}
g(x^+) \le 2  \Omega_{\w,X} \left[L \gamma^\nu \|R_\gamma(x)\|^\nu
+ {\cal Q} \|R_\gamma(x)\| \right],
\eeq
where $\Omega_{\w,X}$ is defined in \eqnok{dist_x}.
\end{itemize}
\end{proposition}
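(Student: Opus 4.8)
The plan is to handle the three parts in sequence, drawing at each stage on the optimality condition \eqnok{optimality} of the prox-mapping that defines $x^+$. The single observation that drives the whole argument is that adding $F(x^+)$ to $\phi_\gamma(x)$ cancels the $-F(x^+)$ term in \eqnok{rel_errors}, leaving
\[
F(x^+) + \phi_\gamma(x) = F(x) + \frac{1}{\gamma}\left[\nabla \w(x^+) - \nabla \w(x)\right],
\]
which is exactly the quantity whose inner product against $z - x^+$ is governed by \eqnok{optimality}. For part a), I would substitute $x^+ = P_x(\gamma F(x))$ into \eqnok{optimality} and divide by $\gamma > 0$ to obtain $\langle F(x) + \gamma^{-1}[\nabla \w(x^+) - \nabla \w(x)],\, z - x^+ \rangle \ge 0$ for every $z \in X$. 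By the display above this reads $\langle F(x^+) + \phi_\gamma(x),\, x^+ - z \rangle \le 0$ for all $z$, and taking the supremum over $z \in X$ gives $\tilde g(x^+,\phi_\gamma(x)) \le 0$ directly from definition \eqnok{def_gap3}. Notably, this part needs no continuity hypotheses whatsoever.

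For part b), the strategy is a triangle-inequality split of $\|\phi_\gamma(x)\|_*$ into the $F$-difference term $\|F(x)-F(x^+)\|_*$ and the $\nabla\w$-difference term $\gamma^{-1}\|\nabla\w(x^+)-\nabla\w(x)\|_*$. The only algebraic input is the identity $x - x^+ = \gamma R_\gamma(x)$, read off immediately from \eqnok{def_res}, so that $\|x - x^+\| = \gamma\|R_\gamma(x)\|$. Bounding the first term by the H\"older continuity \eqnok{smooth1} produces $L\,(\gamma\|R_\gamma(x)\|)^\nu$, while bounding the second by the ${\cal Q}$-Lipschitz property \eqnok{smoothomega} of $\nabla\w$ produces ${\cal Q}\|R_\gamma(x)\|$ after the factor $1/\gamma$ cancels the $\gamma$ in $\|x-x^+\|$. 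Summing gives the claimed estimate \eqnok{bnd_gap0}.

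For part c), I would first convert the gap $g(x^+)$ into a statement about $\phi_\gamma(x)$ using part a): since $\langle F(x^+) + \phi_\gamma(x),\, x^+ - z\rangle \le 0$, we have $\langle F(x^+),\, x^+ - z\rangle \le \langle \phi_\gamma(x),\, z - x^+\rangle$ for all $z \in X$. Taking the supremum over $z$ and applying the generalized Cauchy--Schwarz inequality bounds the right-hand side by $\|\phi_\gamma(x)\|_* \sup_{z\in X}\|z - x^+\|$, whereupon the diameter estimate $\|z - x^+\| \le 2\Omega_{\w,X}$ from \eqnok{dist_x} yields $g(x^+) \le 2\Omega_{\w,X}\,\|\phi_\gamma(x)\|_*$. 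Substituting the bound from part b) then gives \eqnok{bnd_gap}.

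I do not expect any genuine obstacle here; once the cancellation in the opening display is noticed, the remainder is essentially bookkeeping built on the optimality condition, the two continuity assumptions, and the diameter bound. The only point requiring a little care is ensuring that the supremum defining $g(x^+)$ in \eqnok{def_gap1} is finite, so that the chain of inequalities in part c) is legitimate; this is precisely why boundedness of $X$ is imposed as the extra hypothesis there, and it is exactly the same reason \eqnok{dist_x} is available.
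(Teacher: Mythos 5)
Your proposal is correct and follows essentially the same route as the paper's own proof: the cancellation $F(x^+)+\phi_\gamma(x)=F(x)+\gamma^{-1}[\nabla\w(x^+)-\nabla\w(x)]$ combined with the prox-mapping optimality condition for part a), the triangle-inequality split with $\|x-x^+\|=\gamma\|R_\gamma(x)\|$ for part b), and the Cauchy--Schwarz plus diameter bound $2\Omega_{\w,X}$ for part c). No gaps to report.
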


\begin{proof}
Using the definition of $x^+$ in \eqnok{def_xplus}, the optimality condition of \eqnok{s29}
and the fact that  $\nabla V(x,z) = \nabla \w(z) - \nabla \w(x)$,
we have
\[
\langle F(x) + \frac{1}{\gamma}\left[\nabla \w(x^+) - \nabla \w(x) \right],
\, z - x^+ \rangle \ge 0, \ \ \forall \, z \in X,
\]
which together with \eqnok{def_gap3} and \eqnok{rel_errors} then imply that
\beqa
\tilde g(x^+,\phi_{\gamma}(x))
&=& \sup_{z \in X} \langle F(x^+) + \phi_{\gamma}(x), \, x^+ - z \rangle \nn\\
&=& \sup_{z \in X} \langle  F(x) + \frac{1}{\gamma}\left[\nabla \w(x^+) - \nabla \w(x) \right],
\, x^+ - z \rangle \le 0. \label{tmp_bnd_gap}
\eeqa
We have thus shown part a). Now it follows from
the triangular inequality, \eqnok{rel_errors}, \eqnok{smoothomega}, \eqnok{smooth1}
and \eqnok{def_res} that
\beqas
\|\phi_{\gamma}(x)\|_* &\le& \|F(x) - F(x^+)\|_* + \frac{1}{\gamma}\|\nabla \w(x^+) - \nabla \w(x)\|_*\\
&\le& L \|x- x^+\|^\nu + \frac{{\cal Q}}{\gamma} \|x^+ - x\|
= L \left[\gamma \|R_\gamma(x)\|\right]^\nu + {\cal Q} \|R_\gamma(x)\|,
\eeqas
which implies \eqnok{bnd_gap0}. By using
the above conclusion, \eqnok{def_gap1} and \eqnok{tmp_bnd_gap}, we have
\beqas
g(x^+) &=& \sup_{z \in X} \langle [F(x^+) + \phi_{\gamma}(x)] - \phi_{\gamma}(x), x^+ - z \rangle\\
&\le& \sup_{z \in X} \langle - \phi_{\gamma}(x), x^+ - z \rangle \\
&\le&  \left[L \left(\gamma \|R_\gamma(x)\|\right)^\nu + {\cal Q} \|R_\gamma(x)\| \right] \,
\sup_{z \in X} \|x^+ - z\|,
\eeqas
The above inequality together with \eqnok{dist_x} then imply \eqnok{bnd_gap}.
\end{proof}

\vgap

By using Proposition~\ref{prop_relation}, we can easily see the relation between
the residual function $R_\gamma(\cdot)$
and the notion of an $(\epsilon, \delta)$-strong solution under the
Euclidean setup.

\begin{corollary} \label{cor_lip_rel}
Suppose that $\|\cdot\| = \|\cdot\|_2$ and $\w(x) = \|\cdot\|_2^2/2$, and
also assume that $F(\cdot)$ is Lipschitz continuous (i.e., condition \eqnok{smooth} holds). Then,
if, for some $x \in X$ and $\gamma > 0$, the point $x^+$ given by \eqnok{def_xplus}
satisfies $\|R_\gamma(x)\| \le \epsilon$,
it must be an $\left((L\gamma + 1) \epsilon, 0\right)$-strong solution.
In particular, if $\gamma \le 1/L$, then the point $x^+$ must be
an $(2 \epsilon, 0)$-strong solution of $\VI(X,F)$.
\end{corollary}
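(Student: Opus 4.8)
The plan is to recognize this corollary as a direct specialization of Proposition~\ref{prop_relation} to the Euclidean setting, so the real work reduces to substituting the correct values of the structural constants. First I would record the three simplifications forced by the Euclidean choice: since $\|\cdot\| = \|\cdot\|_2$ is self-dual, we have $\|\cdot\|_* = \|\cdot\|_2$; since $\w(x) = \|x\|_2^2/2$ gives $\nabla \w(x) = x$, condition~\eqnok{smoothomega} holds with ${\cal Q} = 1$ (this is also part a) of Proposition~\ref{examples}); and Lipschitz continuity of $F(\cdot)$ is precisely H\"{o}lder continuity~\eqnok{smooth1} with exponent $\nu = 1$.

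With these identifications in hand, I would apply part b) of Proposition~\ref{prop_relation}, whose hypotheses (namely that $\w$ has ${\cal Q}$-Lipschitz continuous gradients and that $F$ is H\"{o}lder continuous) are now met. Setting $\nu = 1$ and ${\cal Q} = 1$ in the bound~\eqnok{bnd_gap0} yields
\[
\|\phi_{\gamma}(x)\|_* \le L \gamma \|R_\gamma(x)\| + \|R_\gamma(x)\| = (L\gamma + 1)\|R_\gamma(x)\|,
\]
so that under the assumption $\|R_\gamma(x)\| \le \epsilon$ we obtain $\|\phi_{\gamma}(x)\|_* \le (L\gamma + 1)\epsilon$. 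Separately, part a) of the same proposition supplies $\tilde g(x^+, \phi_{\gamma}(x)) \le 0$, with no additional hypotheses required.

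To conclude, I would take $\phi = \phi_{\gamma}(x)$ as the certificate demanded by Definition~\ref{def_solution1}: the two displayed facts say exactly that $\|\phi\|_* \le (L\gamma+1)\epsilon$ and $\tilde g(x^+,\phi) \le 0$, which is precisely the statement that $x^+$ is an $((L\gamma+1)\epsilon, 0)$-strong solution. The special case is then immediate arithmetic, since $\gamma \le 1/L$ forces $L\gamma \le 1$ and hence $L\gamma + 1 \le 2$, making $x^+$ a $(2\epsilon, 0)$-strong solution.

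I do not anticipate any genuine obstacle here, as essentially all of the content is carried by Proposition~\ref{prop_relation}. The only point requiring a moment's care is the verification that ${\cal Q} = 1$ for the quadratic distance generating function, since this is where the factor $(L\gamma + 1)$, rather than some other combination of constants, originates; everything else is substitution followed by an appeal to the definition of an $(\epsilon,\delta)$-strong solution.
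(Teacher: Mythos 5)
Your proposal is correct and follows essentially the same route as the paper: the paper's proof simply cites Proposition~\ref{prop_relation}.b) with $\nu = 1$ and ${\cal Q} = 1$, which is exactly your substitution argument. Your write-up is merely more explicit, in particular in invoking Proposition~\ref{prop_relation}.a) for the $\tilde g(x^+,\phi_\gamma(x)) \le 0$ component and Definition~\ref{def_solution1} to assemble the conclusion, both of which the paper leaves implicit.
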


\begin{proof}
The result directly follows from Proposition~\ref{prop_relation}.b) and the facts that $\nu = 1$
and ${\cal Q} = 1$.
\end{proof}

\vgap

It is interesting to note that under the conditions given in
Corollary \ref{cor_lip_rel}, if $\gamma < 1/L$ and $\|R_\gamma(x)\|
\le \epsilon/2$ then the solution given \eqnok{def_xplus} must be an
$(\epsilon, 0)$-strong solution. Such a solution is stronger than an
$(\epsilon, \delta)$-strong solution with $\delta > 0$ as it does
not depend on the second tolerance $\delta$. For example, in the
context of complementarity problems, the complementarity slackness
constraint will be satisfied exactly by an $(\epsilon, 0)$-strong
solution.

\setcounter{equation}{0}

\section{VI problems with Lipschitz or H\"{o}lder continuous operators} \label{sec_basic_alg}
Our main goal in this section is to establish the complexity
of a non-Euclidean extragradient (N-EG) method for solving the GMVI problems
discussed in Section~\ref{sec_problem}.
We assume throughout this section that the operator $F(\cdot)$ is
either Lipschitz or, more generally, H\"{o}lder continuous.

The extragradient method is a classical method for solving VI problems that was initially
proposed by Korpelevich~\cite{Kor76}.
While earlier studies
on Korpelevich's extragradient method or
its variants were focused on their asymptotical convergence behaviour
(see, e.g., \cite{SolSva99-1,Sun95-1,tseng00-1}),
the complexity analysis of these types of methods has only appeared recently
in the literature~\cite{Nem05-1,MonSva09-1}. More specifically, Nemirovski~\cite{Nem05-1}
established the complexity of a generalized version of Korpelevich's extragradient method
for computing a weak solution of $\VI(X,F)$ and showed that one can possibly improve its
performance by replacing the projection step with the prox-mapping defined in \eqnok{s29}.
Some of these results were generalized by Auslender and Teboulle~\cite{AuTe05-1}
in their interior projection methods for monotone variational inequalities.
More recently, Monteiro and Svaiter~\cite{MonSva09-1}
studied the complexity of the original Korpelevich's extragradient method
(under a more general framework).
Most of these previous studies need to assume the operator $F(\cdot)$ to be monotone and
Lipschitz continuous. To the best of our knowledge, the complexity
of extragradient-type methods for solving more general VI problems
(e.g., the operator $F(\cdot)$ is pseudo-monotone and has different levels of continuity)
has never been studied in the literature. It is worth noting that under this general setting,
the notion of a weak solution is not useful any more and one has to resort to
the notions of approximate strong solutions as discussed in Section~\ref{sec_term}.
Moreover, it is unclear how one can benefit from taking
the prox-mapping (rather than metric projection)
in the extragradient method for the computation of strong solutions to $\VI(X,F)$.

\vgap



\noindent{\bf The non-Euclidean extragradient (N-EG) method for GMVI:}
\begin{itemize}
\item [] {\bf Input:} Initial point $x_1 \in X$ and stepsizes $\{\gamma_k\}_{k \ge 1}$.
\item [0)] Set $k = 1$.
\item [1)] Compute
\beqa
y_k &=& P_{x_{k}}(\gamma_k F(x_{k})), \label{tt1}\\
x_{k+1} &=& P_{x_{k}}(\gamma_k F(y_k)). \label{tt2}
\eeqa
\item [2)] Set $k = k+1$ and go to Step 1.
\end{itemize}

We now add a few remarks about the above N-EG method. Firstly,
observe that under the Euclidean case when $\|\cdot\| = \|\cdot\|_2$
and $\w(x) = \|x\|^2/2$, the computation of $(y_t, x_t)$, $t \ge 1$,
is the same as Korpelevich's extragradient or Euclidean
extragradient (E-EG) method. 
Secondly, it should be noted that, while the above N-EG method is
similar to Nemirovski's mirror-prox method for solving monotone VI
problems, its convergence analysis and the specification of the
algorithmic parameters (e.g., $\|\cdot\|$, $\w(\cdot)$ and
$\gamma_k$) differ significantly from those in \cite{Nem05-1}, since
we are dealing with a much wider class of problems and using different
termination criteria.

\vgap

In order to establish the convergence properties of the above N-EG method,
we first need to show a few technical results.

Let $p(u)$ be a convex function over a convex set $\setU \in \bbr^n$.
Assume that $\hu$ is an optimal solution of the problem
$
\min \{ p(u) + \|u-\tu\|^2: u \in \setU \}
$
for some $\tu \in \setU$. Due to the well-known fact that
the sum of a convex and a strongly convex function is also strongly convex,
one can easily see that
\[
p(u) + \|u-\tu\|^2 \ge \min \{ p(v) + \|v-\tu\|^2: v \in \setU \} +
\|u - \hu\|^2.
\]
The next lemma generalizes this result to the case where the
function $\|u - \tu\|^2$ is replaced with the prox-function
$V(\tu,u)$ associated with a convex function $\w$.
It can be viewed as a Bregman version of ``growth formula" for
strongly convex functions and is based on a Pythagora like
formula for Bregman distances. The proof of this result
can be found, e.g., in Lemma 1 of \cite{Lan10-3} and
Lemma 6 of \cite{LaLuMo11-1}.

\begin{lemma} \label{tech1_prox}
Let $\setU$ be a convex set in $\bbr^n$ and
$p, \w:\setU \to \bbr$ be differentiable convex functions. Assume that
$\hu$ is an optimal solution of
$\min \{ p(u) + V(\tu, u) : u \in \setU\}$.
Then,
\[
p(\hu) +  V(\tu, \hu) + V(\hu, u)
 \le p(u) + V(\tu, u) ,
\ \ \forall u \in \setU.
\]
\end{lemma}

With this result, we can show an important recursion of
the N-EG method for $\VI(X,F)$.
More specifically, let $x^* \in X^*$ be an optimal solution,
the next result describes how the
the distance $V(x_k, x^*)$ decreases at each iteration of
the N-EG method.

\begin{lemma}
Let $x_{1}\in X$ be given and the pair $(y_k,x_{k+1}) \in X \times X$ be computed
according to \eqnok{tt1}-\eqnok{tt2}. Also let $X^*$
denote the solution set of $\VI(X,F)$. Then,
the following statements hold:
\begin{itemize}
\item [a)] There exists $x^* \in X^*$ such that
\beq \label{recur}
- \frac{\gamma_k^2}{2 \alpha} \|F(x_k) - F(y_k)\|_*^2 + V(x_{k}, y_k) \le
V(x_{k}, x^*) - V(x_{k+1}, x^*);
\eeq
\item [b)] If $F(\cdot)$ is H\"{o}lder continuous (i.e., condition \eqnok{smooth1} holds),
then there exists $x^* \in X^*$ such that for any $\nu \in (0,1]$,
\beq \label{recur_h}
V(x_k, y_k) - 2^{\nu - 1} \, L^2 \gamma_k^2 \alpha^{-(1+\nu)} \left[ V(x_k, y_k)\right]^\nu
\le V(x_k, x^*) - V(x_{k+1}, x^*).
\eeq
In particular, if $F(\cdot)$ is Lipschitz continuous (i.e., condition \eqnok{smooth} holds),
then we have
\beq \label{recur_l}
\left(1- L^2 \gamma_k^2 \alpha^{-2} \right) V(x_k, y_k) \le V(x_k, x^*) - V(x_{k+1}, x^*).
\eeq
\end{itemize}
\end{lemma}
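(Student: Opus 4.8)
The plan is to exploit the two-step extragradient structure by invoking Lemma~\ref{tech1_prox} once for each prox-mapping and then assembling the two resulting inequalities. First I would apply the lemma to $y_k = P_{x_k}(\gamma_k F(x_k))$, with $p(\cdot) = \langle \gamma_k F(x_k), \cdot \rangle$ and the lemma's $\tu$ taken to be $x_k$, obtaining for every $u \in X$
\[
\langle \gamma_k F(x_k), y_k - u \rangle + V(x_k, y_k) + V(y_k, u) \le V(x_k, u),
\]
and likewise to $x_{k+1} = P_{x_k}(\gamma_k F(y_k))$ with $p(\cdot) = \langle \gamma_k F(y_k), \cdot \rangle$, which gives for every $u \in X$
\[
\langle \gamma_k F(y_k), x_{k+1} - u \rangle + V(x_k, x_{k+1}) + V(x_{k+1}, u) \le V(x_k, u).
\]

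The heart of the argument is to specialize the second inequality at $u = x^*$ for a suitable $x^* \in X^*$ and the first at $u = x_{k+1}$, then add them. Setting $u = x^*$ in the second inequality and rearranging produces a lower bound on $V(x_k, x^*) - V(x_{k+1}, x^*)$ containing the term $\gamma_k \langle F(y_k), x_{k+1} - x^* \rangle$. I would split this as $\gamma_k \langle F(y_k), x_{k+1} - y_k \rangle + \gamma_k \langle F(y_k), y_k - x^* \rangle$, and this is precisely where Assumption A1 enters: applied at $x = y_k$, it guarantees $\langle F(y_k), y_k - x^* \rangle \ge 0$, so the second piece may simply be discarded. Substituting $u = x_{k+1}$ into the first inequality then lets me eliminate $V(x_k, x_{k+1})$ and combine everything into
\[
V(x_k, x^*) - V(x_{k+1}, x^*) \ge V(x_k, y_k) + V(y_k, x_{k+1}) + \gamma_k \langle F(y_k) - F(x_k), x_{k+1} - y_k \rangle.
\]

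The main obstacle is controlling the residual cross term $\gamma_k \langle F(y_k) - F(x_k), x_{k+1} - y_k \rangle$, which has no definite sign. My plan is to bound it below by the conjugate-norm Cauchy--Schwarz inequality followed by Young's inequality, splitting the weights so that the coefficient of $\|x_{k+1}-y_k\|^2$ is exactly $\alpha/2$, which yields
\[
\gamma_k \langle F(y_k) - F(x_k), x_{k+1} - y_k \rangle \ge - \frac{\gamma_k^2}{2\alpha} \|F(x_k) - F(y_k)\|_*^2 - \frac{\alpha}{2}\|x_{k+1} - y_k\|^2.
\]
The crucial cancellation is that the strong-convexity bound \eqnok{bnd_v} gives $V(y_k, x_{k+1}) \ge \tfrac{\alpha}{2}\|y_k - x_{k+1}\|^2$, which exactly absorbs the negative $-\tfrac{\alpha}{2}\|x_{k+1} - y_k\|^2$ term; substituting this into the previous display yields part~a) directly.

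For part~b), I would substitute the H\"older bound \eqnok{smooth1}, namely $\|F(x_k) - F(y_k)\|_* \le L\|x_k - y_k\|^\nu$, into the estimate of part~a) and then convert $\|x_k - y_k\|$ back to $V(x_k, y_k)$ via \eqnok{bnd_v}, using $\|x_k - y_k\|^2 \le \tfrac{2}{\alpha} V(x_k, y_k)$ raised to the power $\nu$. This rewrites the error term as $2^{\nu-1} L^2 \gamma_k^2 \alpha^{-(1+\nu)} [V(x_k, y_k)]^\nu$ and establishes \eqnok{recur_h}; the Lipschitz recursion \eqnok{recur_l} is then simply the special case $\nu = 1$, for which the exponent on $V(x_k, y_k)$ becomes $1$ and the two $V(x_k, y_k)$ terms combine into the stated factor $1 - L^2 \gamma_k^2 \alpha^{-2}$.
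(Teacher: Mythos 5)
Your proposal is correct and follows essentially the same route as the paper's proof: two applications of Lemma~\ref{tech1_prox} (with the same choices of $p$, $\tu$, $\hu$), Assumption A1 invoked at $y_k$ to discard $\gamma_k\langle F(y_k), y_k - x^*\rangle \ge 0$, a Cauchy--Schwarz plus Young estimate on the cross term absorbed by $V(y_k,x_{k+1}) \ge \tfrac{\alpha}{2}\|y_k - x_{k+1}\|^2$, and the H\"older bound combined with \eqnok{bnd_v} for part~b). The only cosmetic difference is that you apply Young's inequality in norm form and then absorb via strong convexity, whereas the paper first replaces $\|y_k - x_{k+1}\|$ by $[\tfrac{2}{\alpha}V(y_k,x_{k+1})]^{1/2}$ and applies Young directly to the prox-function term; the two computations are identical.
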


\begin{proof}
We first show part a). By \eqnok{tt1} and Lemma~\ref{tech1_prox} (with $p(\cdot) = \gamma_k
\langle F(x_k), \cdot \rangle$, $\tilde x = x_k$ and $\hu = y_k$), we have
\[
\gamma_k \langle F(x_{k}), y_k - x\rangle
+ V(x_k,y_k) + V(y_k, x) \le V(x_k, x), \ \forall x \in X.
\]
Letting $x = x_{k+1}$ in the above inequality, we obtain
\beq \label{bnd_dist}
\gamma_k \langle F(x_k), y_k - x_{k+1}\rangle
+ V(x_k,y_k) + V(y_k, x_{k+1}) \le V(x_k, x_{k+1})
\eeq
Moreover, by \eqnok{tt2} and Lemma~\ref{tech1_prox} (with $p(\cdot) = \gamma_k
\langle F(y_k), \cdot \rangle$, $\tilde x = x_k$ and $\hu = x_{k+1}$), we have
\[
\gamma_k \langle F(y_k), x_{k+1} - x \rangle
+ V(x_k, x_{k+1}) + V(x_{k+1}, x) \le V(x_k,x), \ \forall x \in X.
\]
Replacing $V(x_k, x_{k+1})$ in the above inequality with the bound in \eqnok{bnd_dist}
and noting that $\langle F(y_k), x_{k+1} - x \rangle = \langle F(y_k), y_k - x \rangle
- \langle F(y_k), y_k - x_{k+1} \rangle$, we have
\[
\gamma_k \langle F(y_k), y_k - x \rangle + \gamma_k \langle F(x_k) - F(y_k), y_k - x_{k+1}\rangle
+ V(x_k, y_k) + V(y_k, x_{k+1}) + V(x_{k+1}, x) \le V(x_k, x),
\]
which, in view of Assumption A1, then implies that
\beq \label{reg_recursion}
\gamma_k \langle F(x_k) - F(y_k), y_k - x_{k+1}\rangle
+ V(x_k, y_k) + V(y_k, x_{k+1}) + V(x_{k+1}, x^*) \le V(x_k, x^*),
\eeq
In order to show \eqnok{recur}, we only need to
bound the left hand side of \eqnok{reg_recursion}. By using Cauchy Schwarz inequality and \eqnok{bnd_v}, we have
\beqas
\lefteqn{\gamma_k \langle F(x_k) - F(y_k), y_k - x_{k+1}\rangle
+ V(x_{k}, y_k) + V(y_k, x_{k+1}) }\\
&\ge& - \gamma_k \|F(x_k) - F(y_k)\|_* \|y_k - x_{k+1}\|
+ V(x_{k}, y_k) + V(y_k, x_{k+1}) \\
&\ge& - \gamma_k \|F(x_k) - F(y_k)\|_* \left[\frac{2}{\alpha}
V(y_k, x_{k+1})\right]^\frac{1}{2} + V(x_{k}, y_k) + V(y_k, x_{k+1}) \\
&\ge& - \frac{\gamma_k^2}{2 \alpha} \|F(x_k) - F(y_k)\|_*^2 + V(x_{k}, y_k),
\eeqas
where the last inequality follows from Young's inequality.
Combining the above observation with \eqnok{reg_recursion},
we arrive at relation \eqnok{recur}.
Now, it follows from the assumption \eqnok{smooth1} and \eqnok{bnd_v} that
\[
\|F(x_k) - F(y_k)\|_*^2 \le L^2 \|x_k - y_k\|^{2\nu}
\le L^2 \left[\frac{2}{\alpha} V(y_k,x_k)\right]^\nu.
\]
Combining the previous observation with \eqnok{recur}, we obtain \eqnok{recur_h}.
Relation \eqnok{recur_l} immediately follows from \eqnok{recur_h}
with $\nu = 1$.
\end{proof}

\vgap

We are now ready to establish the complexity of the N-EG method
for solving GMVI problems.
We start with the relatively easier case when $F(\cdot)$ is
Lipschitz continuous.

\begin{theorem} \label{theorem_lip}
Suppose that $F(\cdot)$ is Lipschitz continuous (i.e.,
condition \eqnok{smooth} holds) and that the stepsizes $\gamma_k$
are set to
\beq \label{stepsize_l}
\gamma_k = \frac{\alpha}{\sqrt{2} L}, \ \ \ k \ge 1.
\eeq
Also let $R_\gamma(\cdot)$, $g(\cdot)$, $\tilde{g}(\cdot, \cdot)$
and $\phi_\gamma(\cdot)$ be defined in \eqnok{def_res}, \eqnok{def_gap1},
\eqnok{def_gap3} and \eqnok{rel_errors}, respectively.
\begin{itemize}
\item [a)] For any $k \in \mathbb{N}$, there exists $i \le k$ such that
\beq \label{iter_bnd_l}
\|R_{\gamma_i}(x_i)\|^2
\le \frac{8 L^2}{\alpha^3 k} V(x_1, x^*), \ \ \ k \ge 1;
\eeq
\item [b)] If $\w(\cdot)$ has ${\cal Q}$-Lipschitz continuous gradients w.r.t. $\|\cdot\|$, then
for every $k \in \mathbb{N}$, there exists $i \le k$ such that
\beq \label{iter_bnd_ms}
\tilde{g}(y_i, \phi_{\gamma_i}(x_i)) \le 0 \ \ \
\mbox{and} \ \ \
\|\phi_{\gamma_i}(x_i)\|_* \le \frac{2 L (\alpha + \sqrt{2} {\cal Q})}{\alpha^{3/2}}
\sqrt{\frac{V(x_1, x^*)}{k}};
\eeq
\item [c)] If, in addition, the set $X$ is bounded, then for every $k \in \mathbb{N}$,
there exists $i \le k$ such that
\beq
g(y_i) \le \frac{2\sqrt{2} L \,(\alpha + \sqrt{2} {\cal Q}) \, \Omega_{\w,X}^2 }{\alpha \sqrt{k}},
\eeq
where $\Omega_{\w,X}$ is defined in \eqnok{dist_x}.
\end{itemize}
\end{theorem}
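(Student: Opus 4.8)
The plan is to derive all three bounds from the single recursion \eqnok{recur_l}, exploiting that the prescribed stepsize \eqnok{stepsize_l} is tuned precisely so that $L^2\gamma_k^2\alpha^{-2}=1/2$; with this choice \eqnok{recur_l} collapses into the clean one-step inequality $\tfrac12 V(x_k,y_k)\le V(x_k,x^*)-V(x_{k+1},x^*)$. For part a) I would sum this over $k=1,\dots,K$: the right-hand side telescopes, and since $V(x_{K+1},x^*)\ge 0$ this yields $\sum_{k=1}^{K}V(x_k,y_k)\le 2V(x_1,x^*)$. A standard averaging (pigeonhole) argument then produces an index $i\le K$ with $V(x_i,y_i)\le 2V(x_1,x^*)/K$. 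To pass from $V$ to the residual, note that $y_i=P_{x_i}(\gamma_i F(x_i))$, so by \eqnok{def_res} one has $R_{\gamma_i}(x_i)=(x_i-y_i)/\gamma_i$; the lower bound \eqnok{bnd_v} then gives $\|R_{\gamma_i}(x_i)\|^2\le 2\alpha^{-1}\gamma_i^{-2}V(x_i,y_i)$, and substituting $\gamma_i^2=\alpha^2/(2L^2)$ together with the averaged bound reproduces \eqnok{iter_bnd_l}.

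For part b) the relation $\tilde g(y_i,\phi_{\gamma_i}(x_i))\le 0$ is immediate from Proposition~\ref{prop_relation}.a) applied at $x=x_i$, since there $x^+=y_i$ in the notation of \eqnok{def_xplus}. For the norm estimate I would invoke Proposition~\ref{prop_relation}.b) with $\nu=1$, which gives $\|\phi_{\gamma_i}(x_i)\|_*\le (L\gamma_i+{\cal Q})\|R_{\gamma_i}(x_i)\|$. Plugging in $\gamma_i=\alpha/(\sqrt2\,L)$ turns the prefactor into $(\alpha+\sqrt2\,{\cal Q})/\sqrt2$, and multiplying by the part-a) bound $\|R_{\gamma_i}(x_i)\|\le (2\sqrt2\,L/\alpha^{3/2})\sqrt{V(x_1,x^*)/k}$ gives \eqnok{iter_bnd_ms} once the $\sqrt2$ factors cancel.

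For part c), with $X$ now bounded, I would apply Proposition~\ref{prop_relation}.c), or equivalently use the chain $g(y_i)\le 2\Omega_{\w,X}\|\phi_{\gamma_i}(x_i)\|_*$ extracted from its proof together with the diameter estimate \eqnok{dist_x}. The one additional ingredient is to absorb $V(x_1,x^*)$ into $\Omega_{\w,X}^2$: since $x_1$ is the minimizer of $\w$ over $X$, inequality \eqnok{strong_h_e} gives $V(x_1,x^*)\le D_{\w,X}^2=\tfrac{\alpha}{2}\Omega_{\w,X}^2$. Substituting this into the part-b) estimate and using the cancellation $\sqrt{\alpha/2}\,\alpha^{-3/2}=1/(\sqrt2\,\alpha)$ yields the claimed rate.

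I expect the only genuine subtlety to lie in the part-c) bookkeeping: one must recall that the initial iterate $x_1$ is taken to be the $\w$-minimizer, so that \eqnok{strong_h_e} applies and $V(x_1,x^*)$ can be replaced by $\tfrac{\alpha}{2}\Omega_{\w,X}^2$ with the correct power of $\alpha$. Everything else is a direct assembly of the established recursion and Proposition~\ref{prop_relation}, with the stepsize \eqnok{stepsize_l} doing the essential work of forcing a constant contraction coefficient $1/2$ independent of $k$.
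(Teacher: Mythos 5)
Your proposal is correct and follows essentially the same route as the paper: the stepsize \eqnok{stepsize_l} collapses \eqnok{recur_l} into $\tfrac12 V(x_k,y_k)\le V(x_k,x^*)-V(x_{k+1},x^*)$, which is telescoped and combined with \eqnok{bnd_v} to get part a), after which parts b) and c) are read off from Proposition~\ref{prop_relation} with $\nu=1$. In fact your write-up is more complete than the paper's, which merely cites Proposition~\ref{prop_relation} for b) and c); you correctly supply the step the paper leaves implicit, namely that part c)'s constant requires $V(x_1,x^*)\le D_{\w,X}^2=\tfrac{\alpha}{2}\Omega_{\w,X}^2$ via \eqnok{strong_h_e}, i.e.\ that $x_1$ is taken to be the minimizer of $\w$ over $X$.
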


\begin{proof}
Using \eqnok{recur_l} and \eqnok{stepsize_l}, we have
\[
\frac{1}{2} V(x_k, y_k) \le V(x_k, x^*) - V(x_{k+1}, x^*), \ \ k \ge 1.
\]
Also it follows from \eqnok{bnd_v} and definition \eqnok{def_res} that
\beq \label{bnd_v_below}
V(x_k, y_k) \ge \frac{\alpha}{2} \|x_k - y_k\|^2
= \frac{\alpha \gamma_k^2}{2} \|R_{\gamma_k}(x_k)\|^2.
\eeq
Combining the above two observations, we obtain
\[
\gamma_k^2 \|R_{\gamma_k}(x_k)\|^2 \le \frac{4}{\alpha} \left[ V(x_k, x^*) - V(x_{k+1}, x^*)\right],
k \ge 1.
\]
By summing up these inequalities we arrive at
\[
\sum_{i=1}^k \gamma_i^2 \min_{i = 1, \ldots, k} \|R_{\gamma_i}(x_i)\|^2
\le \sum_{i=1}^k  \gamma_i^2 \|R_{\gamma_i}(x_i)\|^2 \le \frac{4}{\alpha} V(x_1, x^*), \ \
k \ge 1,
\]
which implies that
\beq \label{bnd_L}
\min_{i = 1, \ldots, k} \|R_{\gamma_i}(x_i)\|^2 \le \frac{4}{\alpha \sum_{i=1}^k \gamma_i^2}
V(x_1, x^*).
\eeq
Using the above inequality and \eqnok{stepsize_l}, we obtain
the bound in \eqnok{iter_bnd_l}.
Part b) directly follows from Proposition~\ref{prop_relation}.a) and b) and
bound \eqnok{iter_bnd_l}. Moreover, Part c) follows from Proposition.c),
bound \eqnok{iter_bnd_l} and the definition of $\Omega_{X,\w}$ in \eqnok{dist_x}.
\end{proof}

\vgap

We now add a few comments about the results obtained in
Theorem~\ref{theorem_lip}.
Firstly, in view of Theorem~\ref{theorem_lip}.b),
under the Euclidean setup where $\|\cdot\| = \|\cdot\|_2$ and
$\w(x) = \|x\|_2^2/2$ (hence $\alpha = 1$), the error bounds obtained in \eqnok{iter_bnd_ms}
is stronger than the corresponding ones in Theorem 5.2 of \cite{MonSva09-1}
in the following sense:
(a) the bound is applicable to a more general class of VI problems, i.e., the GMVI problems;
and (b) the second residual $\delta$ used in the notion of
an $(\epsilon, \delta)$-strong solution associated with the sequence
$\{y_k\}$ always vanishes as opposed to the one obtained in \cite{MonSva09-1}.
Secondly, our results apply also to the non-Euclidean setup where $\w(\cdot)$ is not necessarily
$\|\cdot\|_2^2/2$. This can lead to more efficient variants of the N-EG
method for solving large-scale GMVI problems as demonstrated in Section 6. 

In the next result, we discuss the convergence properties of the
N-EG method for solving GMVI problems with H\"{o}lder continuous operators.
For the sake of simplicity, we assume here that the number of iterations $k$
is fixed a priori.

\vgap

\begin{theorem} \label{theorem_hold}
Suppose that $F(\cdot)$ is H\"{o}lder continuous (i.e.,
condition \eqnok{smooth1} holds) and that the stepsizes
$\gamma_i, i = 1, \ldots, k$, in the N-EG method are set to
\beq \label{stepsize_h}
\gamma_i = \frac{\alpha^\frac{1+\nu}{2}}{L (2 \nu)^{\frac{\nu}{2}}}
\left(\frac{1}{k}\right)^{\frac{1-\nu}{2}},
\eeq
where $k$ be the number of iterations given a priori.
Also let $R_\gamma(\cdot)$, $g(\cdot)$, $\tilde{g}(\cdot, \cdot)$
and $\phi_\gamma(\cdot)$ be defined in \eqnok{def_res}, \eqnok{def_gap1},
\eqnok{def_gap3} and \eqnok{rel_errors}, respectively.
\begin{itemize}
\item [a)] There exists $i \le k$ such that
\beq \label{iter_bnd_h}
\|R_{\gamma_i}(x_i)\|^2 \le \frac{8 L^2}{\alpha^{2+\nu} k^\nu} [1 + V(x_1, x^*)];
\eeq
\item [b)] If $\w(\cdot)$ has ${\cal Q}$-Lipschitz continuous gradients w.r.t. $\|\cdot\|$,
then there exists $i \le k$ such that
\beq \label{iter_bnd_ms_h}
\tilde{g}(y_i, \phi_{\gamma_i}(x_i)) \le 0 \ \ \
\mbox{and} \ \ \
\|\phi_{\gamma_i}(x_i)\|_* \le \frac{2\sqrt{2} C_\alpha L }{(\alpha k)^\frac{\nu}{2}} [1+V(x_1,x^*)]^\frac{1}{2},
\eeq
where $C_\alpha = 4 + {\cal Q}/\alpha$;
\item [c)] If, in addition, the set $X$ is bounded, then there exists $i \le k$ such that
\beq \label{iter_bnd_gap_h}
g(y_i) \le \frac{4\sqrt{2} C_\alpha L  \Omega_{\w,X}}{(\alpha k)^\frac{\nu}{2}} [1+V(x_1,x^*)]^\frac{1}{2},
\eeq
where $\Omega_{\w,X}$ is defined in \eqnok{dist_x}.
\end{itemize}
\end{theorem}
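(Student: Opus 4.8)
The plan is to build on the Hölder-continuous recursion in \eqnok{recur_h} and mimic the structure of the Lipschitz proof in Theorem~\ref{theorem_lip}, with the extra twist coming from the nonlinear term $[V(x_k,y_k)]^\nu$. First I would substitute the stepsize \eqnok{stepsize_h} into \eqnok{recur_h}. The coefficient of the nonlinear term becomes $2^{\nu-1} L^2 \gamma_k^2 \alpha^{-(1+\nu)} = 2^{\nu-1}\alpha^{-(1+\nu)}\cdot \alpha^{1+\nu}(2\nu)^{-\nu} L^{-2}\cdot L^2 \cdot k^{-(1-\nu)} = 2^{\nu-1}(2\nu)^{-\nu} k^{-(1-\nu)}$. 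The goal is to show the left-hand side of \eqnok{recur_h} is bounded below by something like $c\,[V(x_k,y_k)]$ or, more precisely, by a quantity that sums telescopically. The key algebraic device is Young's inequality in the form $a^\nu \le \nu a + (1-\nu)$ for $a \ge 0$ and $\nu \in (0,1]$ (applied to $a = V(x_k,y_k)$ after a suitable rescaling), which linearizes the $[V(x_k,y_k)]^\nu$ term at the cost of an additive constant $(1-\nu)$. This is the step I expect to do the real work: it converts the concave penalty into a linear term plus a constant, producing the stray $1$ that appears inside $[1+V(x_1,x^*)]$ in the final bounds.

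After linearizing, I would verify that the coefficient multiplying $V(x_k,y_k)$ on the left of \eqnok{recur_h} is at least $1/2$, so that, paralleling the derivation leading to \eqnok{bnd_L}, one obtains $\tfrac12 V(x_k,y_k) \le V(x_k,x^*) - V(x_{k+1},x^*) + (\text{const})\,k^{-(1-\nu)}$. The constant factors in \eqnok{stepsize_h} are chosen precisely so that this $1/2$ threshold holds and the residual additive term, after summing over $i=1,\dots,k$, contributes a bounded amount. Summing these inequalities telescopes the $V(x_i,x^*)$ terms to give $\sum_{i=1}^k V(x_i,y_i) \le 2V(x_1,x^*) + k\cdot(\text{const})\,k^{-(1-\nu)}$; the additive terms accumulate to order $k^\nu$, which is exactly what produces the $k^{-\nu}$ (rather than $k^{-1}$) decay. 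I would then invoke the relation $V(x_k,y_k) \ge \tfrac{\alpha\gamma_k^2}{2}\|R_{\gamma_k}(x_k)\|^2$ from \eqnok{bnd_v_below}, divide through by $\min_i$, and substitute the explicit value of $\gamma_i^2 = \alpha^{1+\nu}(2\nu)^{-\nu}L^{-2}k^{-(1-\nu)}$ to arrive at \eqnok{iter_bnd_h}.

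Once part a) is in hand, parts b) and c) follow mechanically, exactly as in Theorem~\ref{theorem_lip}: I would feed the bound \eqnok{iter_bnd_h} on $\|R_{\gamma_i}(x_i)\|$ into Proposition~\ref{prop_relation}.a) and b). For part b), \eqnok{bnd_gap0} gives $\|\phi_{\gamma_i}(x_i)\|_* \le L[\gamma_i\|R_{\gamma_i}(x_i)\|]^\nu + \mathcal{Q}\|R_{\gamma_i}(x_i)\|$; the two terms must be recombined into the single clean constant $C_\alpha = 4 + \mathcal{Q}/\alpha$, which requires bounding the factor $\gamma_i^\nu$ using \eqnok{stepsize_h} and checking that both the Hölder term and the $\mathcal{Q}$ term scale like $(\alpha k)^{-\nu/2}$. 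For part c), I would apply \eqnok{bnd_gap} together with \eqnok{dist_x}, picking up the extra factor of $2\Omega_{\w,X}$, which accounts for the discrepancy between the constants in \eqnok{iter_bnd_ms_h} and \eqnok{iter_bnd_gap_h}. The main obstacle throughout is purely bookkeeping of constants: the Young-inequality linearization and the precise calibration of $\gamma_i$ must conspire so that the residual coefficient is exactly $1/2$ and the accumulated constants collapse into the stated $C_\alpha$ and the additive $1$ inside $[1+V(x_1,x^*)]$.
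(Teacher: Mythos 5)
Your overall strategy---linearize the concave term $[V(x_i,y_i)]^\nu$ in \eqnok{recur_h} while keeping a coefficient of at least $1/2$ on $V(x_i,y_i)$, telescope, and convert to the residual via \eqnok{bnd_v_below}---is the same as the paper's; indeed the paper's device $\max_{d\ge 0}\{a\,d^\nu - d/2\}$ is exactly the optimally rescaled tangent-line (Young) bound you have in mind. However, your error accounting contains a genuine gap that breaks part a). You claim the per-iteration additive constant produced by the linearization is of order $k^{-(1-\nu)}$, so that the errors ``accumulate to order $k^\nu$,'' and you assert this is what produces the $k^{-\nu}$ rate. It is the opposite: if the summed error were of order $k^\nu$, the proof would fail. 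From $\tfrac12\sum_{i=1}^k V(x_i,y_i)\le 2V(x_1,x^*)+C k^{\nu}$ and \eqnok{bnd_v_below} with $\gamma_i^2=\alpha^{1+\nu}L^{-2}(2\nu)^{-\nu}k^{-(1-\nu)}$, one only gets
\[
\min_{i\le k}\|R_{\gamma_i}(x_i)\|^2 \;\le\; \frac{4L^2(2\nu)^\nu}{\alpha^{2+\nu}k^{\nu}}\,V(x_1,x^*)\;+\;\frac{2C\,L^2(2\nu)^\nu}{\alpha^{2+\nu}},
\]
whose second term does not decay in $k$ at all; this is strictly weaker than \eqnok{iter_bnd_h} (which tends to $0$ as $k\to\infty$) and also contradicts your own earlier sentence that the summed residual ``contributes a bounded amount.''

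The fix is precisely the calibration you left unspecified in ``after a suitable rescaling.'' Write the tangent-line bound as $d^\nu \le \nu c^{\nu-1} d + (1-\nu)c^{\nu}$ and require the linear coefficient $a_i\,\nu c^{\nu-1}\le 1/2$, where $a_i := 2^{\nu-1}L^2\gamma_i^2\alpha^{-(1+\nu)} = 2^{\nu-1}(2\nu)^{-\nu}k^{-(1-\nu)}$ under \eqnok{stepsize_h}. This forces $c \ge (2a_i\nu)^{1/(1-\nu)} = \nu/k$, and taking $c=\nu/k$ makes the additive term $a_i(1-\nu)c^{\nu} = \tfrac{1-\nu}{2k}$, i.e.\ order $1/k$, not $k^{-(1-\nu)}$. (Your order $k^{-(1-\nu)}$ is what the unrescaled inequality $a^\nu\le \nu a+(1-\nu)$ gives, and it is too large.) With per-iteration error at most $\tfrac{1}{2k}$, the errors sum to at most $1$; the telescoped bound becomes $\frac{\alpha^{2+\nu}k^{\nu}}{4L^2(2\nu)^\nu}\min_{i\le k}\|R_{\gamma_i}(x_i)\|^2 \le V(x_1,x^*)+1$, and \eqnok{iter_bnd_h} follows using $(2\nu)^\nu\le 2$. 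So the bounded error sum is the source of the additive $1$ in $[1+V(x_1,x^*)]$, while the $k^{-\nu}$ rate comes solely from $\gamma_i^2 k \propto k^{\nu}$. Parts b) and c) then go through essentially as you outline (the paper collapses the $\gamma_i^\nu$ factor into $C_\alpha$ using $(2\nu)^{\nu^2/2}\ge 1/4$), but they inherit the gap until part a) is repaired.
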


\begin{proof}
Since the case when $\nu = 1$ has been shown in Theorem~\ref{theorem_lip},
we assume that $\nu \in (0,1)$.
After rearranging the terms in \eqnok{recur_h},
we obtain, $\forall i = 1, \ldots, k$,
\beq \label{rel_holder}
\frac{1}{2} V(x_i, y_i) \le V(x_i, x^*) - V(x_{i+1}, x^*) +
\underbrace{2^{\nu - 1}
\, L^2 \gamma_i^2 \alpha^{-(1+\nu)} \left[ V(x_i, y_i)
\right]^\nu - \frac{1}{2} V(x_i, y_i)}_{\Delta_i}.
\eeq
Note that, in view of the observation that
\[
\max_{d \ge 0} \left\{ a \, d^\nu -  d/2 \right\}
\le a (2 \nu a)^{-\frac{\nu}{\nu-1}} = \alpha^{-\frac{1}{\nu-1}} (2\nu)^{-\frac{\nu}{\nu-1}}, \ \ \
\forall \, a > 0, \, \nu \in (0,1),
\]
and relation \eqnok{stepsize_h}, we have
\beq \label{bnd_delta}
\Delta_i \le \frac{1}{2} \left[ L^2 \gamma_i^2 \alpha^{-(1+\nu)}\right]^{-\frac{1}{\nu-1}}
(2 \nu)^{-\frac{\nu}{\nu-1}} = \frac{1}{2k} \le \frac{1}{k}, \ \ i = 1, \ldots, k.
\eeq
Moreover, it follows from \eqnok{bnd_v_below} and
\eqnok{stepsize_h} that
\[
V(x_i, y_i) \ge \frac{\alpha \gamma_i^2}{2} \|R_{\gamma_i}(x_i)\|^2
= \frac{\alpha^{2+\nu} k^{\nu-1}}{2 L^2 (2 \nu)^\nu}
\|R_{\gamma_i}(x_i)\|^2.
\]
Using the previous two bounds in \eqnok{rel_holder}, we conclude
\beq \label{import_bnd_h}
\frac{\alpha^{2+\nu} k^{\nu-1}}{4 L^2 (2 \nu)^\nu}
\|R_{\gamma_i}(x_i)\|^2 \le V(x_i, x^*) - V(x_{i+1}, x^*)
+ \frac{1}{k}, \ \ \ i = 1, \ldots, k.
\eeq
Summing up the above inequalities and using the fact that
$k \min_{i=1, \ldots, k} \|R_{\gamma_i}(x_i)\|^2 \le
\sum_{i=1}^k \|R_{\gamma_i}(x_i)\|^2$,
we have
\[
\frac{\alpha^{2+\nu} k^\nu}{4 L^2 (2 \nu)^\nu}
\min_{i=1, \ldots, k} \|R_{\gamma_i}(x_i)\|^2
\le V(x_1,x^*) - V(x_{N+1}, x^*)
+ 1 \le V(x_1,x^*)+1,
\]
which clearly implies \eqnok{iter_bnd_h} in view of
the fact that $(2\nu)^\nu \le 2$.

We now show part b). The first inequality of \eqnok{iter_bnd_ms_h} follows
directly from Proposition~\ref{prop_relation}.a) and the definitions
of $x_i$ and $y_i$. Note that by \eqnok{bnd_gap0} and the fact that $\gamma_1= \gamma_2 = \ldots =\gamma_k$ due to
\eqnok{stepsize_h}, we have
\beqas
\min_{i=1,\ldots, k} \|\phi_{\gamma_i}(x_i)\|
&\le& \min_{i=1,\ldots, k} \left\{ L \left( \gamma_i \|R_{\gamma_i}(x_i)\|\right)^\nu
+ {\cal Q} \|R_{\gamma_i}(x_i)\|\right\} \\
&\le&  L \left( \gamma_k \min_{i=1,\ldots,k}\|R_{\gamma_i}(x_i)\|\right)^\nu
+ {\cal Q} \min_{i=1,\ldots,k}\|R_{\gamma_i}(x_i)\| ,
\eeqas
which together with \eqnok{stepsize_h} and \eqnok{iter_bnd_h} then imply that
\beqas
\min_{i=1,\ldots, k} \|\phi_{\gamma_i}(x_i)\|
&\le& \frac{ L }{k^\frac{\nu}{2}}
\left[ \left(\frac{2\sqrt{2}(1+V(x_1,x^*))^\frac{1}{2}}{(2\nu)^\frac{\nu}{2} \alpha^\frac{1}{2}}\right)^\nu
+ \frac{2\sqrt{2} {\cal Q} (1+V(x_1,x^*))^\frac{1}{2}}{\alpha^\frac{2+\nu}{2}}\right]\\
&\le& \frac{2\sqrt{2} L (1+V(x_1,x^*))^\frac{1}{2}}{k^\frac{\nu}{2}}
\left[\frac{1}{(2\nu)^\frac{\nu^2}{2} \alpha^\frac{\nu}{2}} + \frac{{\cal Q}}{\alpha^\frac{2+\nu}{2}} \right]\\
&\le& \frac{2\sqrt{2} L (1+V(x_1,x^*))^\frac{1}{2}}{(\alpha k)^\frac{\nu}{2}}
\left( 4 + \frac{{\cal Q}}{\alpha}\right),
\eeqas
where the last two inequalities follow from the facts that
$\nu \in (0,1]$ and that 
\[
(2\nu)^\frac{\nu^2}{2} \ge \nu^\frac{\nu^2}{2}
\ge \nu^\frac{\nu}{2} \ge e^{-\frac{1}{2e}} \ge \frac{1}{4}.
\]

Part c) follows from \eqnok{bnd_gap} and an argument similar to the one used
in the proof of part b).
\end{proof}

\vgap

A few remarks about the results obtained in Theorem~\ref{theorem_hold}
are in place. Firstly, it seems possible to relax the assumption that
the number of iterations $k$ is given a priori. For example, if we
set
\beq \label{stepsize_h1}
\gamma_i = \frac{\alpha^\frac{1+\nu}{2}}{L (2 \nu)^{\frac{\nu}{2}}}
\left(\frac{1}{i}\right)^{\frac{1-\nu}{2}}, \ \ \ i = 1, 2, \ldots,
\eeq
we can show essentially the same rate of convergence as
the one obtained in \eqnok{iter_bnd_gap_h} when the set $X$
is bounded, and slightly worse (with an additional
logarithmic factor $\log (1/k)$) convergence rate than those
stated in \eqnok{iter_bnd_h} and \eqnok{iter_bnd_ms_h} and \eqnok{iter_bnd_gap_h}
if the set $X$ is unbounded.

Secondly, according to Theorem~\ref{theorem_hold}.b), if $F(\cdot)$ is H\"{o}lder continuous,
an $(\epsilon,0)$-strong solution of $\VI(X,F)$ can be computed
in at most
\[
{\cal O}\left\{\frac{V(x_1,x^*)^\frac{1}{\nu}}
{\alpha^\frac{\nu+2}{\nu}} \left(\frac{{\cal Q} L}{\epsilon}\right)^\frac{2}{\nu}\right\}
\]
iterations. This complexity result appears to be new also for the case
when $F(\cdot)$ is monotone and the distance generating function
$\w(\cdot) = \|\cdot\|_2^2/2$.

Thirdly, the results in Theorem~\ref{theorem_hold}
indicate how the rates of convergence of the N-EG method
depend on the continuity assumption about $F(\cdot)$.
In view of Theorem~\ref{theorem_hold}, if $F(\cdot)$ is continuous
but not necessarily H\"{o}lder continuous,
i.e., $\nu = 0$, the sequence $\{y_k\}$ generated by the N-EG method
will not converge to any solutions of $\VI(X,F)$.
We will address this issue in next section to deal with GMVI problems
with much weaker continuity assumptions on their operators.

\setcounter{equation}{0}
\section{VI problems with general continuous operators} \label{sec_adv_alg}
In this section, we consider GMVI problems where the
operator $F(\cdot)$ is continuous but not necessarily Lipschitz or H\"{o}lder
continuous. Our goal is to show that the N-EG method, after incorporating
a simple line search procedure, can generate a sequence of solutions converging to
the optimal one of $\VI(X,F)$ under this more general setting. More specifically,
we study the conditions on the continuity assumption of $F(\cdot)$ and
those on the distance generating function $\w(\cdot)$ in order to guarantee the convergence
of these types of N-EG methods applied to more general GMVI problems.

It should be noted that there exist a few earlier developments (e.g.,
Sun~\cite{Sun95-1}, Solodov and Svaiter~\cite{SolSva99-1}) that
generalized Korpelevich's extragradient method for solving GMVI problems.
However, to the best of our knowledge, there does not exist any non-Euclidean extragradient
type methods for solving GMVI problems. It should be noted that, while earlier extragradient type methods
for GMVI problems (e.g., \cite{SolSva99-1,Sun95-1}) rely on a
certain monotonicity property of the metric projection (e.g., Gafni and Bertsekas \cite{GafBer84-1}),
in general such a property is not assumed by the prox-mapping.

A related but different work to ours is due to Auslender and Teboulle~\cite{AuTe05-1}.
In~\cite{AuTe05-1}, Auslender and Teboulle studied the generalization of the mirror-prox method
for solving monotone and locally Lipschitz continuous VI problems by incorporating a line search
procedure. More specifically, they show the convergence of their methods under the assumption
that the prox-function $V(\cdot, \cdot)$ is quadratic. Our development significantly differs
from \cite{AuTe05-1} in the following aspects:
i) we deal with VI problems with generalized monotone (e.g., pseudo-monotone) operators;
ii) we use more general prox-functions which are not necessarily quadratic;
and iii) we consider VI problems with general continuous
operators, in addition to those with locally Lipschitz continuous operators.

We are now ready to describe a variant of the N-EG method obtained
by incorporating a simple linear search procedure for solving VI
problems with general continuous operators.

\vgap

\noindent{\bf The N-EG method with line search (N-EG-LS):}
\begin{itemize}
\item [] {\bf Input:} Initial point $x_1 \in X$,
initial stepsize $\gamma_0 \in (0,1)$ and $\lambda \in (0,1)$.
\item [0)] Set $k = 1$;
\item [1)] Compute $R_{\gamma_0}(x_k)$. If $\|R_{\gamma_0}(x_k)\| = 0$
{\bf terminate} the algorithm. Otherwise, choose the $\gamma_k$ with the largest value
from the list $\left\{\gamma_0, \gamma_0 \lambda, \gamma_0 \lambda^2, \ldots \right\}$
 such that
\beq \label{term_ls}
\|F(x_k) - F(y_k)\|_*^2 \le \frac{\alpha}{\gamma_k^2} V(x_k, y_k),
\eeq
where $y_k = P_{x_{k}}(\gamma_k F(x_{k}))$;
\item [3)] Compute
\beq \label{tt2-v}
 x_{k+1} = P_{x_{k}}(\gamma_k F(y_k));
\eeq
\item [4)] Set $k = k+1$ and go to Step 1.
\end{itemize}

\vgap

In order to establish the convergence of the above N-EG-LS method,
we first need to show certain properties of the line-search procedure used in
Step 1 of this algorithm. In the remaining part of this section, we
say that {\sl the line search procedure is well-defined} if
the following two conditions hold:
\begin{itemize}
\item [a)]
The line search procedure will terminate (i.e., condition
\eqnok{term_ls} will be satisfied) after a finite number of steps in
choosing $\gamma_k$ from the list $\{\gamma_0, \gamma_0 \lambda,
\ldots\}$ for any $k \ge 1$;
\item [b)] There exists $K \in \mathbb{N}$ and $\gamma^* > 0$
such that
\beq \label{min_gamma}
\gamma_k \ge \gamma^* \ \ \ \forall k \ge K.
\eeq
\end{itemize}
Note that condition b) is used to show the convergence of the
sequence $\{x_k\}$ (c.f., Lemma~\ref{asy_con}).

Traditionally, the well-definedness
of the line search procedure was established by using certain important
properties of the metric projection. Specifically, let us denote,
for any $x \in \bbr^n$ and $d \in \bbr^n$,
\beq \label{cond_trad}
\theta(\beta) := \frac{\|\Pi_X(x+ \beta d) - x\|_2}{\beta}, \ \ \
\beta > 0.
\eeq
Then, it is shown by Gafni and Bertsekas \cite{GafBer84-1} that
the function $\theta(\beta)$ is monotonically nonincreasing
with respect to $\beta$.
In Proposition~\ref{prop_linesearch}, we show that,
if $F(\cdot)$ is a general continuous operator,
the line-search procedure is well-defined under a much weaker assumption
than the aforementioned the monotonicity of $\theta(\beta)$.
Moreover, we present a sufficient condition on $\w(\cdot)$ which can
guarantee that the above assumption is satisfied. In addition, we show
in Proposition~\ref{prop_linesearch1} that we do not need any conditions similar to \eqnok{cond_trad} when
$F(\cdot)$ is locally Lipschitz continuous.
The following well-known property of
the prox-mapping (e.g., Lemma 2.1 in \cite{Nem05-1})
is used in the proof of Proposition~\ref{prop_linesearch}.

\begin{lemma} \label{pos_proxmapping}
Let $x \in X$ be given. we have
\beq \label{lip_prox}
\|P_x(\phi_1) - P_x(\phi_2)\| \le \alpha^{-1} \|\phi_1 - \phi_2\|_*, \ \
\forall \, \phi_1, \phi_2 \in \bbr^n.
\eeq
\end{lemma}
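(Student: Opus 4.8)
The plan is to derive the non-expansiveness estimate \eqnok{lip_prox} by combining the first-order optimality conditions of the two prox-mappings with the strong convexity of $\w(\cdot)$. Write $u_1 = P_x(\phi_1)$ and $u_2 = P_x(\phi_2)$; both points lie in $X^o$ by construction of the prox-mapping, so that $\nabla \w(u_1)$ and $\nabla \w(u_2)$ are well-defined.

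First I would record the optimality condition of the minimization problem \eqnok{s29} defining $P_x(\phi)$, namely
\[
\langle \phi + \nabla \w(P_x(\phi)) - \nabla \w(x), \, z - P_x(\phi) \rangle \ge 0, \quad \forall z \in X,
\]
which is the same relation as in \eqnok{optimality} with $\gamma F(x)$ replaced by a generic $\phi$. Applying this to $\phi_1$ with the test point $z = u_2$, and to $\phi_2$ with the test point $z = u_1$, produces two inequalities.

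Next I would add the two inequalities. The terms involving $\nabla \w(x)$ cancel, and after rearranging the remaining terms I would obtain
\[
\langle \nabla \w(u_1) - \nabla \w(u_2), \, u_1 - u_2 \rangle \le \langle \phi_1 - \phi_2, \, u_2 - u_1 \rangle.
\]
The left-hand side is bounded below by $\alpha \|u_1 - u_2\|^2$ using the strong convexity relation \eqnok{s27}, while the right-hand side is bounded above by $\|\phi_1 - \phi_2\|_* \, \|u_1 - u_2\|$ via the generalized Cauchy--Schwarz inequality relating a norm and its conjugate. Chaining these bounds gives $\alpha \|u_1 - u_2\|^2 \le \|\phi_1 - \phi_2\|_* \, \|u_1 - u_2\|$, and dividing through by $\|u_1 - u_2\|$ (the case $u_1 = u_2$ being trivial) yields $\alpha \|u_1 - u_2\| \le \|\phi_1 - \phi_2\|_*$, which is exactly \eqnok{lip_prox}.

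Since this is a classical property of prox-mappings, I do not anticipate a genuine obstacle. The only point requiring care is the justification that $u_1, u_2 \in X^o$, so that the optimality condition may be written in terms of the gradient $\nabla \w$ rather than a generic subgradient; this follows from the standing assumptions on the distance generating function, under which the minimizer in \eqnok{s29} always lies in $X^o$.
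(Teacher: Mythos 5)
Your proof is correct, and it is the standard argument for this fact: cross-testing the two optimality conditions, cancelling the $\nabla\w(x)$ terms, and combining strong convexity \eqnok{s27} with the Cauchy--Schwarz inequality for the pair $\|\cdot\|$, $\|\cdot\|_*$. The paper itself gives no proof of Lemma~\ref{pos_proxmapping} --- it simply cites Lemma 2.1 of \cite{Nem05-1} --- and the argument given there is essentially the one you reconstructed, so there is nothing to flag; your remark that the minimizers lie in $X^o$ (so that gradients rather than subgradients may be used) is exactly the point that the standing assumptions on the distance generating function are designed to guarantee.
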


\vgap

\begin{proposition} \label{prop_linesearch}
Suppose that $F(\cdot)$ is continuous and also assume that
there exists $q > 0$ such that, for any $x\in X$ and $\phi \in \bbr^n$,
\beq \label{monotone_prox}
\frac{V(x, P_x(\gamma \phi))}{\gamma^2} \le \frac{q V(x, P_x(\beta \phi))}{\beta^2},
\ \ \, \gamma \ge \beta > 0.
\eeq
Then, the line search procedure in Step 1
of the N-EG-LS method is well-defined.
In particular, if the distance generating function $\w(\cdot)$ has
${\cal Q}$-Lipschitz continuous gradients w.r.t. $\|\cdot\|$,
then relation \eqnok{monotone_prox} holds with
$q = 1+ {\cal Q}^2/\alpha^2,
$
where $\alpha$ is the modulus of $\w(\cdot)$.
\end{proposition}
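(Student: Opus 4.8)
The plan is to prove the two parts of Proposition~\ref{prop_linesearch} separately. The first part asserts well-definedness of the line search under assumption~\eqnok{monotone_prox}; the second part verifies that~\eqnok{monotone_prox} holds with the stated $q$ whenever $\w(\cdot)$ has ${\cal Q}$-Lipschitz gradients. I would begin with the abstract part, since it isolates the role of~\eqnok{monotone_prox}.

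For condition a) (finite termination), fix $k$ and let $\phi = F(x_k)$. I would argue by contradiction: suppose the line search never terminates, so that for every $j$ the trial stepsize $\gamma_0 \lambda^j$ fails~\eqnok{term_ls}, i.e. $\|F(x_k) - F(y_k^{(j)})\|_*^2 > (\alpha / (\gamma_0\lambda^j)^2)\, V(x_k, y_k^{(j)})$, where $y_k^{(j)} = P_{x_k}(\gamma_0\lambda^j F(x_k))$. As $j \to \infty$ the stepsize tends to $0$, and by Lemma~\ref{pos_proxmapping} (applied with $\phi_1 = \gamma_0\lambda^j F(x_k)$, $\phi_2 = 0$, noting $P_{x_k}(0) = x_k$) one gets $\|y_k^{(j)} - x_k\| \le \alpha^{-1}\gamma_0\lambda^j\|F(x_k)\|_* \to 0$, so $y_k^{(j)} \to x_k$ and, by continuity of $F$, $\|F(x_k)-F(y_k^{(j)})\|_* \to 0$. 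The delicate point is that the right-hand side of~\eqnok{term_ls} also shrinks, so I cannot simply send both sides to zero. This is exactly where~\eqnok{monotone_prox} enters: it gives a \emph{lower} bound on $V(x_k, P_{x_k}(\gamma\phi))/\gamma^2$ in terms of its value at a fixed reference stepsize $\beta$, since~\eqnok{monotone_prox} rearranges to $V(x_k, P_{x_k}(\beta\phi))/\beta^2 \le q\, V(x_k, P_{x_k}(\gamma\phi))/\gamma^2$ for $\gamma \ge \beta$. Thus $V(x_k, y_k^{(j)})/(\gamma_0\lambda^j)^2$ stays bounded below by $q^{-1}V(x_k, P_{x_k}(\gamma_0 F(x_k)))/\gamma_0^2$, a fixed positive quantity (positive because $\|R_{\gamma_0}(x_k)\| \ne 0$ at a non-solution, via~\eqnok{bnd_v}). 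The right-hand side of the failed inequality therefore does not vanish, while the left-hand side does — a contradiction proving finite termination.

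For condition b) (the stepsizes are bounded away from $0$), the same monotonicity is the key. The value $\gamma_k$ chosen is the largest element of the list satisfying~\eqnok{term_ls}, so either $\gamma_k = \gamma_0$, or the previous trial $\gamma_k/\lambda$ violated~\eqnok{term_ls}. In the latter case $\|F(x_k)-F(P_{x_k}((\gamma_k/\lambda)F(x_k)))\|_*^2 > (\alpha\lambda^2/\gamma_k^2)V(x_k, P_{x_k}((\gamma_k/\lambda)F(x_k)))$; I would combine this with continuity of $F$ and~\eqnok{monotone_prox} to convert the ratio $V/\gamma^2$ across stepsizes and extract a uniform lower bound $\gamma^*$, at least along the convergent subsequences supplied by the surrounding analysis (Lemma~\ref{asy_con}). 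The cleanest route is to note that if $\gamma_k \to 0$ along a subsequence, the violation at $\gamma_k/\lambda$ would force, by the argument of part a), $\|F(x_k)-F(y)\|_* \to 0$ while the $V/\gamma^2$ term stays bounded below, again a contradiction.

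For the concluding ``in particular'' claim, I would verify~\eqnok{monotone_prox} directly with $q = 1 + {\cal Q}^2/\alpha^2$. Using the quadratic growth~\eqnok{quad_growth}, $V(x, P_x(\gamma\phi)) \le (\tfrac{{\cal Q}}{2})\|x - P_x(\gamma\phi)\|^2$, and the lower bound~\eqnok{bnd_v}, $V(x, P_x(\beta\phi)) \ge (\tfrac{\alpha}{2})\|x - P_x(\beta\phi)\|^2$. I then need to relate $\|x - P_x(\gamma\phi)\|$ to $\|x - P_x(\beta\phi)\|$; writing $P_x(\gamma\phi) - x = [P_x(\gamma\phi) - P_x(\beta\phi)] + [P_x(\beta\phi) - x]$ and applying Lemma~\ref{pos_proxmapping} to the first bracket gives $\|P_x(\gamma\phi) - P_x(\beta\phi)\| \le \alpha^{-1}(\gamma-\beta)\|\phi\|_*$. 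The main obstacle — and the step I expect to require the most care — is controlling this cross term so the final ratio collapses to the clean constant $1 + {\cal Q}^2/\alpha^2$ with the stated $\gamma^2/\beta^2$ scaling; I anticipate using the optimality characterization~\eqnok{optimality} of the prox-mapping to bound $\|P_x(\beta\phi)-x\|$ in terms of $\beta\|\phi\|_*$ and thereby absorb the difference term, after which the two estimates assemble into~\eqnok{monotone_prox}.
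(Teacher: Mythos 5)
Your treatment of the two well-definedness claims is essentially the paper's own argument: for finite termination you lower-bound the trial ratio $V(x_k,y_k^{(j)})/\gamma_{kj}^2$ via \eqnok{monotone_prox} by the fixed positive quantity $q^{-1}V(x_k,P_{x_k}(\gamma_0 F(x_k)))/\gamma_0^2 \ge (\alpha/2q)\|R_{\gamma_0}(x_k)\|^2$, and contradict this with $\|F(x_k)-F(y_k^{(j)})\|_*\to 0$ obtained from \eqnok{lip_prox} and continuity of $F$; for the lower bound on $\gamma_k$ you apply the same reasoning to the last rejected stepsize $\gamma_k/\lambda$. This is exactly what the paper does. (Minor slip: \eqnok{monotone_prox} rearranges to $V(x,P_x(\beta\phi))/\beta^2 \ge q^{-1}V(x,P_x(\gamma\phi))/\gamma^2$ for $\gamma\ge\beta$, not to the inequality you wrote; your subsequent use of it is the correct one.)

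The genuine gap is in the ``in particular'' claim, and the obstacle you flag is not a technicality that more care will remove: the route cannot be completed. To absorb the cross term $\alpha^{-1}(\gamma-\beta)\|\phi\|_*$ coming from \eqnok{lip_prox} into a multiple of $\|x-P_x(\beta\phi)\|$, you would need a lower bound of the form $\|x-P_x(\beta\phi)\|\ge c\,\beta\|\phi\|_*$. The optimality condition \eqnok{optimality} only yields the opposite (upper) bound, and the lower bound is false whenever $-\phi$ has a large component in the normal cone of $X$ at the prox point. Concretely, take the Euclidean setup ($\alpha={\cal Q}=1$), $X=\{z\in\bbr^2: z_1\ge 0\}$, $x=(0,0)$, and $\phi=(a,-1)$ with $a>0$ large. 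Then $P_x(t\phi)=\Pi_X(-t\phi)=(0,t)$ for every $t>0$, so $V(x,P_x(\gamma\phi))/\gamma^2 = V(x,P_x(\beta\phi))/\beta^2 = 1/2$ and \eqnok{monotone_prox} holds with $q=1$; yet your assembled upper bound $\frac{{\cal Q}}{2\gamma^2}\bigl[\alpha^{-1}(\gamma-\beta)\|\phi\|_*+\|x-P_x(\beta\phi)\|\bigr]^2$ grows like $a^2$ (take $\gamma=2\beta$). So no constant $q$ depending only on $\alpha$ and ${\cal Q}$ can emerge from that chain of estimates: bounding the prox difference in norm discards exactly the directional information that makes \eqnok{monotone_prox} true.

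The paper's proof never estimates $\phi$ in norm. It applies Lemma~\ref{tech1_prox} to the $\gamma$-prox problem with comparison point $z=P_x(\beta\phi)$, producing the inner product $\gamma\langle\phi,P_x(\gamma\phi)-P_x(\beta\phi)\rangle$, and then invokes the optimality condition of the $\beta$-prox problem, $\langle\beta\phi+\nabla\w(P_x(\beta\phi))-\nabla\w(x),\,P_x(\gamma\phi)-P_x(\beta\phi)\rangle\ge 0$, to replace that inner product by one involving only differences of $\nabla\w$; the normal-cone component of $\phi$ thus cancels instead of being estimated. Only then are \eqnok{bnd_v}, \eqnok{smoothomega} and Cauchy--Schwarz applied, and the choice $q=1+{\cal Q}^2/\alpha^2$ makes the resulting expression a perfect square, hence nonnegative. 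Note also that \eqnok{quad_growth}, which your plan leans on, is never needed.
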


\begin{proof}
Suppose first that condition \eqnok{monotone_prox} holds.
Consider an arbitrary iteration $k$, $k \ge 1$.
Let us denote $\gamma_{kj} := \gamma_0 \lambda^j$ and
$y_{kj} := P_{x_k}(\gamma_{kj}F(x_k))$, $j \ge 0$.
Observe that $\|R_{\gamma^0}(x_k)\| > 0$ whenever the line search procedure
occurs. Using this observation, \eqnok{bnd_v} and \eqnok{def_res}, we have
\beq \label{tmp_rhs}
\frac{V(x_k, y_{k0})}{(\gamma_0)^2}  \ge \frac{\alpha}{2 (\gamma_0)^2} \|x_k - y_{k0}\|^2
= \frac{\alpha}{2} \|R_{\gamma_0}(x_k)\|^2 > 0.
\eeq
The above inequality together with \eqnok{monotone_prox} then imply that
\[
\frac{V(x_k, y_{kj})}{\gamma_{kj}^2} \ge \frac{V(x_k, y_{k0})}{q (\gamma_0)^2}
 \ge \frac{\alpha}{2 q}  \|R_{\gamma_0}(x_k)\|^2
 >0, \ \ \  \forall \, j \ge 1.
\]
Assume for contradiction that the line search procedure does not terminate
in a finite number of steps. Then, we have
\[
\|F(x_k) - F(y_{kj})\|_*^2 > \frac{\alpha  V(x_k, y_{kj})}{\gamma_{kj}^2}, \forall
j \ge 1.
\]
It then follows from the above two inequalities that
\beq \label{cond_fact}
\|F(x_k) - F(y_{kj})\|_*^2 > \frac{\alpha^2}{2 q}  \|R_{\gamma_0}(x_k)\|^2 > 0, \forall
j \ge 1.
\eeq
On the other hand, using the Lipschitz continuity of the prox-mapping (see \eqnok{lip_prox}),
and the fact that $\lim_{j \to +\infty} \gamma_{kj} = 0$, we have
$\lim_{j \to +\infty} \|x_k - y_{kj}\| = 0$. This observation, in view of
the fact that $F(\cdot)$ is continuous, then imply that
$\lim_{j \to +\infty} \|F(x_k) - F(y_{kj})\|_*^2 = 0$, which
clearly contradicts with \eqnok{cond_fact}.
Hence, the line search procedure must terminate in a finite number
of steps.

We now show that there exists $K \in \mathbb{N}$ and $\gamma^* > 0$
such that \eqnok{min_gamma} holds.
Assume for contradiction that
$\lim_{k \to +\infty}\gamma_k = 0$. Let us denote $\hat x_k := P_{x_k}(\beta^{-1}\gamma_k F(x_k))$.
By the choice of $\gamma_k$, we know that \eqnok{term_ls} is not satisfied for $y_k = \hat{x}_k$,
hence we have
\beq \label{cond_fact1}
\|F(x_k) - F(\hat x_k)\|_*^2 > \frac{\alpha}{(\beta^{-1} \gamma_k)^2} V(x_k, \hat x_k)
\ge \frac{\alpha}{q (\gamma_0)^2} V(x_k, \hat x_k)
\ge \frac{\alpha^2}{2 q (\gamma_0)^2}  \|R_{\gamma_0}(x_k)\|^2 > 0, \ \ k \ge 1.
\eeq
where the second inequality is due to \eqnok{monotone_prox}.
Using the Lipschitz continuity of the prox-mapping (see \eqnok{lip_prox}),
and the assumption that $\lim_{k \to +\infty} \gamma_{k} = 0$, we have
$\lim_{k \to +\infty} \|x_k - \hat x_k\| = 0$.
This observation, in view of
the fact that $F(\cdot)$ is continuous, then imply that
$\lim_{k \to +\infty} \|F(x_k) - F(\hat x_k)\|_*^2 = 0$, which
clearly contradicts with \eqnok{cond_fact1}.

We now show that relation \eqnok{monotone_prox} holds if $\w(\cdot)$
has ${\cal Q}$-Lipschitz continuous gradients.
Denote $x^+_\gamma \equiv P_x(\gamma \phi)$, $x^+_\beta \equiv P_x(\beta \phi)$.
It follows from Lemma \ref{tech1_prox} (with $p(\cdot) = \gamma \langle \phi, \cdot \rangle$, $\tilde x = x$ and $u^* = x^+_\gamma$)
that
\[
\gamma \langle \phi, x^+_\gamma - z \rangle + V(x, x^+_\gamma) + V(x^+_\gamma, z) \le V(x, z), \ \forall z \in X.
\]
Letting $z = x^+_\beta$ in the above relation, we have
\[
V(x, x^+_\beta) - V(x, x^+_\gamma) \ge \gamma \langle \phi, x^+_\gamma - x^+_\beta \rangle + V(x^+_\gamma, x^+_\beta),
\]
which implies that
\beqas
q \gamma^2 V(x, x^+_\beta) -  \beta^2 V(x, x^+_\gamma) &=& (q\gamma^2 -  \beta^2)  V(x, x^+_\beta)
+ \beta^2[V(x, x^+_\beta) - V(x, x^+_\gamma)]\\
&\ge& (q \gamma^2 - \beta^2)  V(x, x^+_\beta) + \beta^2 \gamma
\langle \phi, x^+_\gamma - x^+_\beta \rangle + \beta^2 V(x^+_\gamma, x^+_\beta)\\
&\ge& \left(\frac{\gamma {\cal Q}}{\alpha}\right)^2  V(x, x^+_\beta) + \beta^2 \gamma
\langle \phi, x^+_\gamma - x^+_\beta \rangle + \beta^2 V(x^+_\gamma, x^+_\beta),
\eeqas
where the last inequality follows from the definition of $q$ and the fact that $\gamma \ge \beta$.
Also note that by the optimality condition of \eqnok{s29}, we have
\[
\langle \beta \phi + \nabla \w(x^+_\beta) - \nabla \w(x), x^+_\gamma - x^+_\beta \rangle \ge 0.
\]
Combining the above two conclusions,
relations \eqnok{bnd_v} and \eqnok{smoothomega}, we obtain
\beqas
q \gamma^2 V(x, x^+_\beta) - \beta^2 V(x, x^+_\gamma)
&\ge& \left(\frac{\gamma {\cal Q}}{\alpha}\right)^2  V(x, x^+_\beta) +
\beta \gamma \langle \nabla \w(x^+_\beta) - \nabla \w(x),  x^+_\beta -x^+_\gamma \rangle
+ \beta^2 V(x^+_\gamma, x^+_\beta) \\
&\ge& \left(\frac{\gamma {\cal Q}}{\alpha}\right)^2  V(x, x^+_\beta) - \beta \gamma \|\nabla \w(x^+_\beta) - \nabla \w(x)\|_*
\|x^+_\beta -x^+_\gamma\| + \beta^2 V(x^+_\gamma, x^+_\beta)\\
&\ge&\frac{\alpha}{2} \left[
\left(\frac{\gamma {\cal Q}}{\alpha}\right)^2\|x-x^+_\beta\|^2 - 2 \beta \gamma \frac{{\cal Q}}{\alpha} \|x^+_\beta -x\|
\|x^+_\beta -x^+_\gamma\| + \beta^2 \|x^+_\beta -x^+_\gamma\|^2 \right] \\
&=&\frac{\alpha}{2} \left( \frac{\gamma {\cal Q}}{\alpha} \|x-x^+_\beta\| - \beta \|x^+_\beta -x^+_\gamma\|  \right)^2
\ge 0,
\eeqas
from which \eqnok{monotone_prox} immediately follows.
\end{proof}

\vgap

The next result identifies certain special cases where
we do not need any additional assumptions on the
$\w(\cdot)$ in order to guarantee the well-definedness of the
linear search procedure.

\begin{proposition} \label{prop_linesearch1}
Suppose that $F(\cdot)$ is locally Lipschitz continuous.
Then, regardless the choice of $\w(\cdot)$,
the line search procedure in the N-EG-LS method is well-defined.
In particular, if $F(\cdot)$ is Lipschitz continuous, then
the line search procedure will terminate in at most
\[
\max\left\{1, \log_{\frac{1}{\lambda}} \frac{\alpha}{\sqrt{2} \gamma_0 L }
\right\}
\]
steps. Moreover, in the latter case we have
\beq \label{bound_gamma_below}
\gamma_k \ge \min \left\{ \frac{\lambda \alpha}{\sqrt{2} L}, \gamma_0\right\},
\ \forall k \ge 1.
\eeq
\end{proposition}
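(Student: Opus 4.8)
The plan is to prove the three assertions of Proposition~\ref{prop_linesearch1} by exploiting the Lipschitz (local or global) continuity of $F(\cdot)$ directly in the line-search test \eqnok{term_ls}, without invoking the quadratic-growth hypothesis \eqnok{monotone_prox} that was needed in Proposition~\ref{prop_linesearch}. The key observation is that the inequality \eqnok{term_ls} reads $\|F(x_k)-F(y_k)\|_*^2 \le (\alpha/\gamma_k^2)V(x_k,y_k)$, and by \eqnok{bnd_v} we have $V(x_k,y_k)\ge (\alpha/2)\|x_k-y_k\|^2$; so it suffices to force the left-hand side below $(\alpha^2/2\gamma_k^2)\|x_k-y_k\|^2$. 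Since $y_k=P_{x_k}(\gamma_k F(x_k))$ and $x_k=P_{x_k}(0)$, the Lipschitz continuity of the prox-mapping (Lemma~\ref{pos_proxmapping}) gives $\|x_k-y_k\|\le \alpha^{-1}\gamma_k\|F(x_k)\|_*$, and more importantly $\|x_k-y_k\|\to 0$ as $\gamma_k\to 0$.

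\emph{Well-definedness for locally Lipschitz $F$.} First I would verify condition a), finite termination at each fixed iteration $k$. As $j\to\infty$ along the trial list $\gamma_{kj}=\gamma_0\lambda^j$, Lemma~\ref{pos_proxmapping} forces $y_{kj}=P_{x_k}(\gamma_{kj}F(x_k))\to x_k$, so eventually $y_{kj}$ lies in the neighborhood $B_{x_k}$ on which $F$ is $L$-Lipschitz (here $L$ is the local constant at $x_k$). Then $\|F(x_k)-F(y_{kj})\|_*\le L\|x_k-y_{kj}\|$, and squaring and bounding $\|x_k-y_{kj}\|^2\le (2/\alpha)V(x_k,y_{kj})$ via \eqnok{bnd_v} yields $\|F(x_k)-F(y_{kj})\|_*^2\le (2L^2/\alpha)V(x_k,y_{kj})$, which is $\le (\alpha/\gamma_{kj}^2)V(x_k,y_{kj})$ as soon as $\gamma_{kj}^2\le \alpha^2/(2L^2)$; this happens for all large $j$, so \eqnok{term_ls} is satisfied and the procedure terminates. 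Condition b), the uniform lower bound $\gamma_k\ge\gamma^*$ for $k\ge K$, is the delicate point in the locally Lipschitz case: a single local constant need not control all iterates. I would argue by contradiction along the lines of Proposition~\ref{prop_linesearch}, supposing $\gamma_k\to 0$ and setting $\hat x_k:=P_{x_k}(\lambda^{-1}\gamma_k F(x_k))$, so that the test failed at the rejected stepsize, i.e. $\|F(x_k)-F(\hat x_k)\|_*^2 > (\alpha\lambda^2/\gamma_k^2)V(x_k,\hat x_k)\ge (\alpha^2\lambda^2/2\gamma_k^2)\|x_k-\hat x_k\|^2$; meanwhile local Lipschitz continuity on the relevant neighborhood gives a reverse bound, forcing $\gamma_k$ to stay bounded away from $0$ and producing the contradiction. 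This uniform-bound step, reconciling the merely \emph{local} constant with the \emph{global} iteration, is the main obstacle, and I expect the cleanest route is to show that $\{x_k\}$ stays in a region where a single Lipschitz constant applies, or otherwise to mirror the contradiction argument of Proposition~\ref{prop_linesearch} with $q=1$.

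\emph{The globally Lipschitz case.} Here the analysis is quantitative and clean. With the global constant $L$ from \eqnok{smooth}, the computation above shows that any trial stepsize $\gamma$ satisfying $\gamma\le \alpha/(\sqrt 2\, L)$ passes the test \eqnok{term_ls}, since $\|F(x_k)-F(y_k)\|_*^2\le L^2\|x_k-y_k\|^2\le (2L^2/\alpha)V(x_k,y_k)\le (\alpha/\gamma^2)V(x_k,y_k)$. Starting from $\gamma_0$ and multiplying by $\lambda$, the backtracking reaches a value $\le \alpha/(\sqrt2 L)$ after at most $\lceil \log_{1/\lambda}(\sqrt2\,\gamma_0 L/\alpha)\rceil$ reductions, which (after accounting for the case $\gamma_0\le \alpha/(\sqrt2 L)$ where no reduction is needed) gives the stated bound $\max\{1,\log_{1/\lambda}(\alpha/(\sqrt2\,\gamma_0 L))\}$. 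Finally, since the accepted $\gamma_k$ is the \emph{largest} element of the list passing the test, and since the previous (rejected, if any) trial $\gamma_k/\lambda$ may still exceed $\alpha/(\sqrt2 L)$, the accepted $\gamma_k$ is no smaller than $\lambda$ times the threshold; combined with the possibility that $\gamma_0$ itself is accepted, this yields $\gamma_k\ge \min\{\lambda\alpha/(\sqrt2 L),\gamma_0\}$, establishing \eqnok{bound_gamma_below} and in particular condition b) with $\gamma^*=\min\{\lambda\alpha/(\sqrt2 L),\gamma_0\}$ and $K=1$.
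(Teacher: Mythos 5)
Your proposal is correct and follows essentially the same route as the paper's proof: finite termination and the stepsize lower bound are both obtained by playing a failed test \eqnok{term_ls} against \eqnok{bnd_v} and the Lipschitz bound, and the globally Lipschitz case is settled by observing that any trial stepsize $\gamma \le \alpha/(\sqrt{2}L)$ passes the test, so the rejected trial $\gamma_k/\lambda$ must exceed that threshold. The ``main obstacle'' you flag in the locally Lipschitz case dissolves once you notice that the paper's definition \eqnok{smooth2} of local Lipschitz continuity posits a \emph{single uniform constant} $L$ (only the neighborhoods $B_x$ vary with $x$), so your contradiction inequality $\|F(x_k)-F(\hat x_k)\|_*^2 > \alpha^2\lambda^2\|x_k-\hat x_k\|^2/(2\gamma_k^2)$ combined with \eqnok{smooth2} yields $L^2 > \alpha^2\lambda^2/(2\gamma_k^2) \to \infty$ as $\gamma_k \to 0$, which is exactly the paper's argument (the paper writes $\beta^{-1}\gamma_k$ where $\lambda^{-1}\gamma_k$ is meant). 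One further point: your count $\log_{1/\lambda}\bigl(\sqrt{2}\,\gamma_0 L/\alpha\bigr)$ is the correct one, and it does not ``give the stated bound'' --- the ratio inside the logarithm in the proposition as printed is inverted (a typo in the paper), so do not contort your (right) computation to match it.
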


\begin{proof} Consider the locally Lipschitz continuous case first.
Suppose for contradiction that the line search procedure is not
well-defined. Then, by \eqnok{bnd_v} and \eqnok{term_ls}, we must
have
\[
\|F(x_k) - F(y_{kj})\|_*^2 > \frac{\alpha  V(x_k, y_{kj})}{\gamma_{kj}^2}
\ge \frac{\alpha^2  \|x_k - y_{kj}\|^2}{2 \gamma_{kj}^2}, \forall
j \ge 1,
\]
which, in view of \eqnok{smooth2}, then implies that $ L^2 >
\alpha^2  / (2 \gamma_{kj}^2), j \ge 1. $ Tending $j$ to $+\infty$,
we have arrived at a contradiction. In order to show that there
exists $K \in \mathbb{N}$ and $\gamma^* > 0$ such that
\eqnok{min_gamma} holds, suppose for contradiction that $\lim_{k \to
+\infty}\gamma_k = 0$. Let us denote $\hat x_k :=
P_{x_k}(\beta^{-1}\gamma_k F(x_k))$. By the choice of $\gamma_k$,
\eqnok{term_ls} and \eqnok{bnd_v}, we have \beq \label{cond_fact2}
\|F(x_k) - F(\hat x_k)\|_*^2 > \frac{\alpha}{(\beta^{-1}
\gamma_k)^2} V(x_k, \hat x_k) \ge \frac{\alpha^2}{ 2(\beta^{-1}
\gamma_k)^2} \|x_k - \hat x_k\|^2, \ \ k \ge 1, \eeq which, in view
of \eqnok{smooth2}, then implies that $L^2 > \alpha^2 /
(2(\beta^{-1} \gamma_k)^2)$. Tending $k$ to $+\infty$, we have
arrived at a contradiction.

Now consider the Lipschitz continuous case. By \eqnok{smooth} and \eqnok{bnd_v},
we have
\[
\|F(x_k) - F(y_k)\|_*^2 \le L^2 \|x_k - y_k\|^2 \le \frac{2 L^2 V(x_k, y_k)}{\alpha}.
\]
Comparing the above inequality with \eqnok{term_ls}, we can easily show the
last part of the result.
\end{proof}

\vgap


\vgap

We are now ready to establish the main convergence properties of the
above N-EG-LS method applied to GMVI problems with a general
continuous operator $F(\cdot)$.

\begin{theorem}
Suppose that the line-search procedure is well-defined.
Then the sequences $\{x_k\}_{k \ge 1}$ and $\{y_k\}_{k \ge 1}$ generated
by the N-EG-LS method converge to a strong solution of $\VI(X,F)$.
\end{theorem}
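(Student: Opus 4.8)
The plan is to show that the line-search stopping rule \eqnok{term_ls} turns the recursion \eqnok{recur} into a Fej\'er-type (quasi-monotone) inequality relative to every solution, from which boundedness and a vanishing prox-step follow; then to invoke Lemma~\ref{asy_con} to locate a solution as an accumulation point; and finally to upgrade this to convergence of the whole sequence by a standard Opial/Fej\'er argument applied to that limit point.

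\textbf{Step 1 (a Fej\'er recursion).} The pair $(y_k, x_{k+1})$ produced by the N-EG-LS method is computed by exactly the same prox-mapping formulas as in \eqnok{tt1}--\eqnok{tt2} (with the line-search value of $\gamma_k$), so \eqnok{recur} is valid verbatim for the N-EG-LS iterates; moreover, its derivation invokes only Assumption A1, which holds for every $x^* \in X^*$, so \eqnok{recur} holds simultaneously for all $x^* \in X^*$. The key point is that \eqnok{term_ls} gives $\frac{\gamma_k^2}{2\alpha}\|F(x_k)-F(y_k)\|_*^2 \le \tfrac12 V(x_k, y_k)$; substituting this into \eqnok{recur} yields, for every $x^* \in X^*$,
\[
\tfrac12 V(x_k, y_k) \le V(x_k, x^*) - V(x_{k+1}, x^*), \qquad k \ge 1.
\]

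\textbf{Step 2 (boundedness and a vanishing step) and Step 3 (an accumulation point).} The displayed inequality shows $\{V(x_k, x^*)\}_k$ is nonincreasing for each fixed $x^*\in X^*$; with \eqnok{bnd_v} this bounds $\|x_k - x^*\|$, so $\{x_k\}$ is bounded. Summing telescopes the right-hand side and gives $\sum_{k\ge1} V(x_k, y_k) \le 2V(x_1, x^*) < \infty$, whence $V(x_k, y_k)\to 0$ and, by \eqnok{bnd_v}, $\|x_k-y_k\|\to 0$. Since $y_k = P_{x_k}(\gamma_k F(x_k))$, the facts $V(x_k, y_k)\to 0$ and (by well-definedness) $\gamma_k \ge \gamma^*$ for $k\ge K$ are precisely hypotheses i)--ii) of Lemma~\ref{asy_con}; together with boundedness of $\{x_k\}$ this furnishes an accumulation point $\tilde x$ of $\{x_k\}$ with $\tilde x \in X^*$.

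\textbf{Step 4 (convergence of the whole sequence).} Specializing Step 1 to $x^* = \tilde x$ shows $\{V(x_k, \tilde x)\}$ is nonincreasing, hence convergent to some $\ell \ge 0$. Along a subsequence $x_{n_i}\to \tilde x$, continuity of $\w(\cdot)$ and $\nabla\w(\cdot)$ gives $V(x_{n_i}, \tilde x)\to 0$, so $\ell = 0$; then $V(x_k, \tilde x)\to 0$ and \eqnok{bnd_v} force $x_k \to \tilde x$, and since $\|x_k-y_k\|\to 0$ also $y_k\to \tilde x\in X^*$. The main obstacle I anticipate is exactly this last step: passing from a subsequential limit to full convergence, and in particular justifying $V(x_{n_i},\tilde x)\to 0$ when $\tilde x$ may lie on the boundary of $X$, where $\nabla\w$ can blow up. In the principal settings in which the line search is guaranteed well-defined—e.g.\ when $\w(\cdot)$ has ${\cal Q}$-Lipschitz gradients—the quadratic upper bound $V(x_{n_i},\tilde x)\le \frac{{\cal Q}}{2}\|x_{n_i}-\tilde x\|^2$ from \eqnok{quad_growth} makes this limit immediate, and the same continuity of $\nabla\w$ already used implicitly in Lemma~\ref{asy_con} covers the general well-defined case.
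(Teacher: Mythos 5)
Your proposal is correct and follows essentially the same route as the paper's own proof: combine \eqnok{recur} with \eqnok{term_ls} to get the Fej\'er-type inequality $\tfrac12 V(x_k,y_k)\le V(x_k,x^*)-V(x_{k+1},x^*)$, deduce boundedness and $V(x_k,y_k)\to 0$, invoke Lemma~\ref{asy_con} for an accumulation point $\tilde x\in X^*$, and then re-run the recursion with $x^*=\tilde x$ to upgrade to convergence of the whole sequence. Your extra remark about justifying $V(x_{n_i},\tilde x)\to 0$ when $\tilde x$ lies on the boundary (where $\nabla\w$ may blow up) is a legitimate refinement of a step the paper passes over with ``it easily follows,'' and your resolution via \eqnok{quad_growth} or the continuity of $\nabla\w$ already assumed in Lemma~\ref{asy_con} is adequate.
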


\begin{proof}
First note that relation \eqnok{recur} still holds for the variant
of N-EG method due to our definitions of $x_k$, $y_k$ and
$\gamma_k$, $k \ge 1$. Moreover, it follows from the
well-definedness of the line search step, relation \eqnok{term_ls}
must hold for some $\gamma_k > 0$. Using relations \eqnok{recur} and
\eqnok{term_ls}, we can easily see that, for some $x^* \in X^*$,
\beq \label{con_recur} \frac{1}{2} V(x_k, y_k) \le V(x_k, x^*) -
V(x_{k+1}, x^*), \ \ \ k \ge 1. \eeq Clearly \eqnok{con_recur}
implies that the sequence $V(x_k,x^*)$ is nonincreasing. Therefore,
it converges. Moreover, the sequence $\{x_k\}$ is bounded. Summing
up the inequalities in \eqnok{con_recur}, we obtain
\[
\frac{1}{2}  \sum_{k=1}^\infty  V(x_k, y_k) \le V(x_1, x^*),
\]
which then implies that
\beq \label{lim_V}
\lim_{k \to +\infty} V(x_k, y_k) = 0.
\eeq
Using these observations, the fact that condition \eqnok{min_gamma} holds
for some $K \in \mathbb{N}$ and $\gamma^*$, and Lemma~\ref{asy_con},
there exists an accumulation point $\tilde x$ of $\{x_k\}$ such that
$\tilde x \in X^*$. We can replace $x^*$ in \eqnok{con_recur} by
$\tilde x$. Thus the sequence $\{V(x_k, \tilde x)\}$ converges.
Since $\tilde x$ is an accumulation point of $\{x_k\}$, it easily
follows that $\{V(x_k, \tilde x)\}$ converges to zero, i.e.,
$\{x^k\}$ converges to $\tilde x \in X^*$. The previous conclusion
together with \eqnok{lim_V} then imply the convergence of $\{y^k\}$.
\end{proof}

\vgap

\noindent{\bf Remark.}
Observe that we can estimate the rate of convergence of the N-EG-LS
method applied to GMVI problems with Lipschitz continuous operators.
Indeed, by using \eqnok{bnd_L} and \eqnok{bound_gamma_below}, we
have
\[
\min_{i=1, \ldots,k} \|R_{\gamma_i}(x_i)\|^2 \le
\frac{4 V(x_1, x^*)}{\alpha k \min\{\lambda^2 \alpha^2/2L, \gamma_0^2\}}, k \ge 1.
\]
The above bound is slightly worse than the one in
\eqnok{iter_bnd_l}. However, one potential advantage of the
N-EG-LS method over the N-EG method is that it does not require
the explicit input of the Lipschitz constant $L$.


\section{Numerical Results} \label{sec_num}
In this section, we report preliminary results of our
computational experiments where we compare the performance of
different variants of the N-EG method
discussed in this paper.

\subsection{Problem instances}
We focus on an important class of VI problems $\VI(X,F)$, where $X$
is the standard simplex given by $X = \{x \in \bbr^n| \sum_i x_i =
1, x_i \ge 0, i = 1, \ldots,n\}$ and $F$ is a continuous. These problems are chosen for the
following reasons: i) they have been extensively studied in the
literature (e.g.,
\cite{BenNem00,PangGab93,Sun95-1,Wat79,HarPang90}); ii) A set of complexity
results for these problems have been developed in this paper; and iii)
it is expected that the study on these VI problems with relatively simple
feasible set $X$ can shed some light on problems with more
complicated feasible set $X$.

In particular, the following instances have been used in our
numerical experiments. Note that for most of these problems, the
operator $F$ is not necessarily monotone.

\vgap

\noindent {\bf a. Kojima-Shindo (KS) problem}\\
This problem was studied in \cite{PangGab93}. The operator
$F:\bbr^4\rightarrow \bbr^4$ is defined as:
\[
F\left( x \right) = \left[ \begin{array}{l}
 3x_1^2  + 2x_1 x_2  + 2x_2^2  + x_3  + 3x_4  - 6 \\
 2x_1^2  + x_1  + x_2^2  + 10x_3  + 2x_4  - 2 \\
 3x_1^2  + x_1 x_2  + 2x_2^2  + 2x_3  + 9x_4  - 9 \\
 x_1^2  + 3x_2^2  + 2x_3  + 3x_4  - 3 \\
 \end{array} \right].
\]

\vgap

\noindent{\bf b. Watson (WAT) problem}\\
The operator $F:\bbr^{10} \to \bbr^{10}$ is given by $F(x)=Ax+b$,
where
\[
A = \left( {\begin{array}{*{20}c}
   0 & 0 & { - 1} & { - 1} & { - 1} & 1 & 1 & 0 & 1 & 1  \\
   { - 2} & { - 1} & 0 & 1 & 1 & 2 & 2 & 0 & { - 1} & 0  \\
   1 & 0 & 1 & { - 2} & { - 1} & { - 1} & 0 & 2 & 0 & 0  \\
   2 & 1 & { - 1} & 0 & 1 & 0 & { - 1} & { - 1} & { - 1} & 1  \\
   { - 2} & 0 & 1 & 1 & 0 & 2 & 2 & { - 1} & 1 & 0  \\
   { - 1} & 0 & 1 & 1 & 1 & 0 & { - 1} & 2 & 0 & 1  \\
   0 & { - 1} & 1 & 0 & 2 & { - 1} & 0 & 0 & 1 & { - 1}  \\
   0 & { - 2} & 2 & 0 & 0 & 1 & 2 & 2 & { - 1} & 0  \\
   0 & { - 1} & 0 & 2 & 2 & 1 & 1 & 1 & { - 1} & 0  \\
   2 & { - 1} & { - 1} & 0 & 1 & 0 & 0 & { - 1} & 2 & 2  \\
\end{array}} \right),
\]
$b = e_i$ and $e_i$ is the unit vector. Hence, we have $10$
different instances of Watson problem, i.e., WAT1,WAT2, \ldots, WAT10,
obtained by setting $q = e_1, e_2, \ldots, e_{10}$. This problem was studied
by Watson in \cite{Wat79}.

\vgap

\noindent{\bf c. Sun problem}\\
This problem was discussed by Sun in \cite{Sun95-1} and we consider
problems possibly in larger dimension. The operator $F:\bbr^n \to
\bbr^n$ is given by $F(x)=Ax+b$, where
\[
A = \left( {\begin{array}{*{20}c}
   1 & 2 & 2 &  \cdot  &  \cdot  &  \cdot  & 2  \\
   0 & 1 & 2 &  \cdot  &  \cdot  &  \cdot  & 2  \\
   0 & 0 & 1 &  \cdot  &  \cdot  &  \cdot  & 2  \\
    \cdot  &  \cdot  &  \cdot  &  \cdot  & {} & {} &  \cdot   \\
    \cdot  &  \cdot  &  \cdot  & {} &  \cdot  & {} &  \cdot   \\
    \cdot  &  \cdot  &  \cdot  & {} & {} &  \cdot  & 2  \\
   0 & 0 & 0 &  \cdot  &  \cdot  &  \cdot  & 1  \\
\end{array}} \right)
\]
and $b=(-1,...,-1)$. We consider problem instances with dimension
$n$ ranging from $8,000$ to $30,000.$

\vgap

\noindent{\bf d. Modified HP Hard (MHPH) problem}\\
We modify the Harker's procedure (\cite{HarPang90}) to build an
affine function $F(x)=Ax + b$, where the positive definite matrix
$A$ is randomly generated as $A=M M^T$ (hence the VI problems are
monotone). Each entry of the $n \times n$ matrix $M$ is
uniformly generated in $(-15,-12)$ and vector $b$ has been uniformly
generated in $(-500,0).$ We generated instances with dimension $n$
ranging from $1,000$ to $8,000.$

\vgap

\noindent{\bf e. Randomly generated (RG) instances}\\
We consider an affine function $F(x)=Ax+b$, where each entry of
the $n \times n$ matrix $A$ is uniformly generated in $(-50,150)$
and $q$ is uniformly generated in $(-200,300).$ We do not know if
these VI problems are monotone or not. The dimension $n$ of these
problem instances ranges from $1,000$ to $3,000$.


\subsection{Euclidean algorithms for GMVI problems}
Our first experiments are carried out to compare the two
Euclidean extragradient methods, i.e., the
E-EG and E-EG-LS method presented in this paper. We
also compare these methods with a different method for solving
pseudo-monotone VI problems developed by Sun (Algorithm C in
\cite{Sun95-1}).

Note that two parameters  $\gamma_0 \in (0,1)$ and $\lambda\in
(0,1)$ are required for the line search procedure 
in the N-EG-LS or E-EG-LS methods. We used a simple
fine-tuning procedure to determine these parameters
which is briefly described as follows: for each group of
problems, we choose a smaller set of representative instances
and run these algorithms for each pair (totally $9$ pairs)
of parameters $(\gamma_0, \lambda)$ chosen from $\{0.2, 0.4, 0.8\}$.
We terminate these algorithm until 
the value of gap function $g(\cdot)$ 
falls bellow $10^{-1}$ (our target accuracy is $10^{-3}$) and 
report the number of calls to the projection (or 
prox-mapping). 
For each algorithm, we choose a pair
$(\gamma_0, \lambda)$ corresponding to the smallest total number
of projection calls for this set of representative instances, and then use
these parameters for all the instances of the same problem. For example, 
the results of E-EG-LS method applied to MHPH problem using the aforementioned 
fine-tuning procedure are reported in Table 1.
And, in view of these results, we set $\gamma_0=0.2$ and $\lambda=0.4$ 
for the E-EG-LS method applied to all instances for the MHPH problem.

\begin{table}
\centering \label{fine-tuning}
\footnotesize
\begin{tabular}{| c| c| c|c|c| }
\hline 
\multirow{2}{*}{}Parameters & \multicolumn{4}{|c|}{Number of projection calls $np$} \\ \cline{2-5}
  \cline{2-5} $(\gamma_0, \lambda)$ & $n=1,000$ & $n=3,000$& $n=5,000$ & Total $np$ \\
\hline
  (0.2,0.2)& 1,746 & 3,074& 9,350 & 14,170 \\
\hline
 (0.2,0.4)& 2,397 & 3,069& 4,595 & 10,061 \\
  \hline
  (0.2,0.8)& 4,693 & 7,291& 13,293 & 25,277 \\
  \hline
  (0.4,0.2)& 3,307 & 3,987& 5,174 & 12,468 \\
  \hline
  (0.4,0.4)& 1,924 & 3,457& 5,811 & 11,192 \\
  \hline
  (0.4,0.8)& 5,020 & 7,794& 14,529 & 27,343 \\
  \hline
  (0.8,0.2)& 2,056 & 3,891& 4,456 & 10,403 \\
  \hline
  (0.8,0.4)& 2,384 & 3,349& 7,476 & 13,209\\
  \hline
  (0.8,0.8)& 5,342 & 8,783& 15,285 & 29,410 \\
 \hline

\end{tabular}
\caption{Fine-tuning procedure of E-EG-LS for modified HP Hard
problem}
\end{table}

Also for the Sun's algorithm, we used the parameters suggested in
\cite{Sun95-1}. All these algorithms were implemented in MATLAB
R2009b on a Core i5 3.1 Ghz computer.

We compare the number of iterations $k$, total number calls of
projection $np$ and {\it CPU time} for the above
three algorithms whenever the gap function $g(\cdot)$ evaluated at
the point $x_k$ (see \eqnok{def_gap1}) falls below $10^{-3}.$ The
results are reported for the HP hard and WAT problems as shown in Table 2 and Table 3,
while the results for other problems are similar. We conclude
from these results that using the Euclidean setup, the
performance of E-EG-LS method is comparable to Sun's method.
Moreover, E-EG-LS method can significantly outperform the E-EG
method especially when the Lipschitz constant $L$ is big. In next
subsection, we will demonstrate how we can improve the performance
of the E-EG-LS method by incorporating the non-Euclidean setup.

\begin{table}
\centering \label{hphard_lipschitz}
\footnotesize
\begin{tabular}{| c| c| c|c|c| c|c|c| c|c|c| c|c|}
\hline 
\multirow{3}{*}{}& \multicolumn{9}{|c|}{Algorithms} \\ \cline{2-10}
 & \multicolumn{3}{|c|}{E-EG}& \multicolumn{3}{|c|}{E-EG-LS$\dag$}& \multicolumn{3}{|c|}{Algorithm C}\\
 \cline{2-10} $n$ & $k$& $np$ & CPU time& $k$& $np$ &CPU time& $k$& $np$ & CPU time\\
\hline
  20& 49,502 & 99,004& 33.650 & 578 & 5,198 & 2.0592 & 2,622 & 12,198 & 4.5396 \\
\hline
  40& 9,069 & 18,138& 6.833 & 23 & 205 & 0.1092 & 118 & 515 & 0.2184 \\
 \hline
  50& 124,275 & 248,550 & 91.011 & 85& 676 & 0.2808 & 73 & 298& 0.1404 \\
  \hline
  70 & 12,911 & 25,822 & 10.203 & 20 & 193 & 0.0936 & 59 & 264&0.1404 \\
 \hline
100& 71,321 & 142,642& 285.48& 41 & 384 & 0.7956 & 61 & 287 & 0.7332  \\
\hline
  150 & 43,486&86,972 & 198.79 & 29 & 303& 0.7488& 89 & 405& 0.9204 \\
 \hline
  200 & 757,758& 1,515,516 & 3,622.1 & 504 & 5,493& 14.383& 1262 & 5,967&16.443 \\
 \hline

\multicolumn{10}{l}{%
\begin{minipage}{12cm}%
  \tiny $\dag$: We use parameters $\gamma_0=0.4, \lambda=0.4$ for E-EG-LS.
\end{minipage}}
\end{tabular}
\caption{E-EG vs. E-EG-LS for modified HP Hard problems}
\end{table}

 \begin{table}
\centering \label{smallinst}
\footnotesize
\begin{tabular}{| c| c| c|c|c| c|c|c| c|c|c| c|c|}
\hline \multirow{3}{*}{}& \multicolumn{9}{|c|}{Algorithms} \\
\cline{2-10}
 & \multicolumn{3}{|c|}{E-EG}& \multicolumn{3}{|c|}{E-EG-LS$\dag$}& \multicolumn{3}{|c|}{Algorithm C}\\
 \cline{2-10} INST & $k$& $np$ & CPU time& $k$& $np$ &CPU time& $k$& $np$ & CPU time \\
\hline
WAT1 & 81 & 162 & 0.0936& 56 & 183& 0.0936 & 43 & 183 & 0.0936  \\
 \hline
WAT2 & 25 & 50 & 0.0312& 18 & 55 & 0.0468 & 24 & 105 & 0.0936 \\
 \hline
WAT3 & - &-  &-  &-  & - & - & 548 & 2,395 & 0.8892\\
\hline
WAT4 & 94 & 188 & 0.0780& 60 & 192 & 0.0936 & 54 & 225 & 0.1092 \\
 \hline
WAT5 & 26 & 52 & 0.0312& 18 & 54 & 0.0468 & 21 & 90 & 0.0780  \\
 \hline
WAT6 & 55 & 110 & 0.0468& 37 & 113 & 0.0936 & 64 & 259 & 0.1248 \\
 \hline
WAT7 & 55 & 110 & 0.0468& 37 & 113 & 0.0624 & 95 & 342 & 0.1560  \\
 \hline
WAT8 & 53 & 104 & 0.0624& 31 & 94 & 0.0624 & 29 & 132 & 0.0780  \\
 \hline
WAT9 & 12 & 24 & 0.0312& 8 & 24 & 0.0312 & 9 & 27 & 0.0624  \\
 \hline
WAT10 & 50 & 100 & 0.0468 & 34 & 102 & 0.0624 & 28 & 125 & 0.0936  \\
 \hline
\multicolumn{10}{l}{%
 \begin{minipage}{12cm}%
   \tiny $-$:  indicates that the algorithm diverges and the instance is not a GMVI problem.\\
   \tiny $\dag$: We use parameters $\gamma_0=0.2, \lambda=0.8$ for {\sl Euclidean algorithm.}
 \end{minipage}}
\end{tabular}
\caption{E-EG vs. E-EG-LS for WAT instances}

\end{table}
\subsection{Euclidean vs. non-Euclidean algorithms for GMVI problems}
In this subsection, we conduct experiments to illustrate how one can
improve the performance of the extragradient methods by considering the
following different settings: the {\sl $p$-norm} algorithm with
$\|\cdot\| = \|\cdot\|_1$ and $\w(x) = \|x\|_p^2/2$ with $p = 1 + 1/
\ln n$, the {\sl entropy algorithm} with $\|\cdot\| = \|\cdot\|_1$
and $\w(x) = \sum_i (x_i+\delta/n) \log (x_i+\delta/n)$ with $\delta
= 10^{-16}$, as well as the {\sl Euclidean algorithm} with
$\|\cdot\|=\|\cdot\|_2$ and $\w(x) = \|x\|_2^2/2$.

We first compare these algorithms for a set of relatively smaller
problem instances (namely the KS and WAT problem). As can be seen
from Table 4, since the problem dimensions, $n=4$ for the KS
problem and $n=10$ for the WAT problem, are very small, we do not
observe significant advantages of the non-Euclidean algorithms.



\begin{table}
\centering
\footnotesize
\begin{tabular}{| c| c| c|c|c| c|c|c| c|c|c| c|c|}
\hline \multirow{3}{*}{}& \multicolumn{9}{|c|}{Algorithms} \\
\cline{2-10}
 & \multicolumn{3}{|c|}{Euclidean}& \multicolumn{3}{|c|}{p-norm}& \multicolumn{3}{|c|}{entropy}\\
 \cline{2-10} INST & $k$& $np$ & CPU time& $k$& $np$ &CPU time& $k$& $np$ & CPU time \\
\hline
KS $\dagger $ & 7  & 36 & 0.0312 & 7  & 36 & 0.0312 & 17 & 60 & 0.0312 \\
\hline
WAT1$\ddagger  $ & 56 & 183& 0.0936 & 48 & 149 & 0.0936 & 77 & 275 & 0.0468 \\
 \hline
WAT2 & 18 & 55 & 0.0468 & 20 & 60 & 0.0624& 29 & 90& 0.0156 \\
 \hline
WAT3 & - &-  &-  &-  & - & - & - &- & -\\
\hline
WAT4 & 60 & 192 & 0.0936 & 73 & 223 & 0.1248 & 34 & 102& 0.0156\\
 \hline
WAT5 & 18 & 54 & 0.0468 & 21 & 63 & 0.0624 & 38 & 114 & 0.0156 \\
 \hline
WAT6 & 37 & 113 & 0.0936 & 30 & 90 & 0.0624 & 48 & 144 & 0.0156 \\
 \hline
WAT7 & 37 & 113 & 0.0624 & 36 & 107 & 0.0780 & 44 & 132 & 0.0156 \\
 \hline
WAT8 & 31 & 94 & 0.0624 & 31 & 93 & 0.0624 & 51 & 153 & 0.0312 \\
 \hline
WAT9 & 8 & 24 & 0.0312 & 8 & 24 & 0.0312 & 14 & 42 & 0.0312 \\
 \hline
WAT10 & 34 & 102 & 0.0624 & 29 & 87 & 0.0936 & 39 & 117 & 0.0156 \\
 \hline
\multicolumn{10}{l}{%
 \begin{minipage}{12cm}%
   \tiny $-$: indicates that the algorithm diverges and the instance is not a GMVI problem.\\
    \tiny $\dagger$: We use parameters $\gamma_0=0.2, \lambda=0.4$ for {\sl Euclidean algorithm}, $\gamma_0=0.2, \lambda=0.4$ for {\sl p-norm algorithm},$\gamma_0=0.8, \lambda=0.2$ for {\sl entropy algorithm}.\\
    \tiny $\ddagger$: We use parameters $\gamma_0=0.2, \lambda=0.8$ for {\sl Euclidean algorithm}, $\gamma_0=0.2, \lambda=0.8$ for {\sl p-norm algorithm},$\gamma_0=0.8, \lambda=0.8$ for {\sl entropy algorithm}.
 \end{minipage}}
\end{tabular}
\caption{Euclidean vs. non-Euclidean for smaller instances}
\end{table}

We then consider problems of higher dimension. More specifically,
we compare these three methods applied to SUN, MHPH and RG
problems with the dimension from $1,000$ to $30,000$ and report the results
in Table 5, Table 6 and Table 7 respectively. Clearly, for many
instances, the {\it non-Euclidean algorithms} outperform
the {\it Euclidean algorithms} in terms of  the number of projection calls ($np$)
and the total CPU time. Interestingly, {{\it p-norm algorithm}} is the fastest and 
the most stable one among all these algorithms. In particular, for the MHPH instances,
the {\it p-norm algorithm} can be approximately twice faster than {\it Euclidean algorithm}.
For the RG instances, the p-norm algorithm always outperforms
the other two algorithms. In particular, it succeeds in solving
all the problem instances up to accuracy $10^{-3}$ within our iteration limit 
($100,000$ projection calls), while the other two algorithms failed 
for quite a few of these instances.

\begin{table}
\centering
\footnotesize
\begin{tabular}{| c| c| c|c|c| c|c|c| c|c|c| c|c|}
\hline \multirow{3}{*}{}& \multicolumn{9}{|c|}{Algorithms} \\
\cline{2-10}
 & \multicolumn{3}{|c|}{Euclidean$\dagger$}& \multicolumn{3}{|c|}{p-norm$\ddagger$}& \multicolumn{3}{|c|}{entropy$\S$}\\ \cline{2-10} $n$ & $k$& $np$ & CPU time& $k$& $np$ &CPU time& $k$& $np$ & CPU time\\
 \hline
 8,000 & 24 & 153 & 24.133 & 16 & 74 & 23.904 & 24 & 73 & 15.507 \\
 \hline
 10,000 & 24 & 153 & 31.840 & 17 & 79 & 31.949 & 24 & 73 & 18.502 \\
 \hline
  12,000 & 25 & 166 & 45.817 & 17 & 79 & 40.155 & 25 & 76 & 23.104 \\
 \hline
 14,000 & 26 & 178 & 59.312 & 17 & 81 & 47.315 & 25 & 76 & 28.735 \\
 \hline
 16,000 & 26 & 178 & 74.740 & 17 & 81 & 58.547 & 25 & 76 & 35.381 \\
 \hline
 18,000 & 26 & 178 & 91.167 & 17 & 81 & 65.817 & 25 & 76 & 43.852 \\
 \hline
 20,000 & 26 & 178 & 110.355 & 17 & 81 & 75.739 & 25 & 76 & 51.527 \\
 \hline
 22,000 & 26 & 178 & 132.54 & 17 & 81 & 86.175 & 26 & 79 & 61.854 \\
 \hline
 24,000 & 26 & 178 & 153.19 & 17 & 81 & 96.034 & 26 & 79 & 71.495 \\
 \hline
 26,000 & 26 & 178 & 183.08 & 17 & 81 & 108.58 & 26 & 79 & 79.217 \\
 \hline
 28,000 & 27 & 192 & 227.62 & 17 & 81 & 121.23 & 26 & 79 & 93.616 \\
 \hline
 30,000 & 27 & 192 & 251.04 & 17 & 81 & 138.51 & 26 & 79 & 104.24 \\
 \hline
 \multicolumn{10}{l}{%
  \begin{minipage}{12cm}%
    \tiny $\dagger$: {\sl Euclidean algorithm} parameters $\gamma_0=0.4, \lambda=0.4
    .$\\
    \tiny $\ddagger$: {\sl p-norm algorithm} parameters $\gamma_0=0.2, \lambda=0.4
    .$\\
    \tiny $\S$: {\sl entropy algorithm} parameters $\gamma_0=0.8, \lambda=0.8
    .$\\
  \end{minipage}}
\end{tabular}
\caption{Euclidean vs. non-Euclidean for Sun problem }
\end{table}

\begin{table}
\centering
\footnotesize
\begin{tabular}{| c|c| c|c|c| c|c|c| c|c|c| c|c|}
\hline 
\multirow{3}{*}{}& \multicolumn{9}{|c|}{Algorithms} \\ \cline{2-10}
 & \multicolumn{3}{|c|}{Euclidean $\dag$}& \multicolumn{3}{|c|}{p-norm $\ddag$}& \multicolumn{3}{|c|}{entropy$\S$}\\ \cline{2-10} $n$ & $k$& $np$ & CPU time& $k$& $np$ &CPU time& $k$& $np$ & CPU time\\
 \hline
  1,000 & 318&3,868 & 26.395 & 113 & 818& 9.969& 386 & 2,609 & 38.813 \\
   \hline
  1,500 & 523 & 6,877 & 77.657 & 108 & 822 & 16.976 & 240 & 1,640 & 58.926 \\
 \hline
  2,000 & 200& 2,519 & 41.262 & 148 & 1,147 & 28.877& 272 & 1,863 & 110.18 \\
   \hline
  2,500 & 332 & 4,418 & 110.82 & 205 & 1,604 & 49.562 & 391 & 2,774 & 257.67 \\
 \hline
  3,000 & 323& 4,341 & 165.63 & 685 & 5,350 & 246.60 & 1,189 & 8,431 & 1,189.0 \\
   \hline
  3,500 & 262 & 3,549 & 159.73 & 146 & 1,150 & 68.561 & 256 & 1,815 & 345.51 \\
 \hline
  4,000 & 304 & 4,173 & 263.77 & 521 & 4,087 & 254.72 & 905 & 6,563 & 1,576.7 \\
   \hline
  4,500 & 455 & 6,454 & 417.96 & 328 & 2,588 & 199.20 & 660 & 4,777 & 1,468.7 \\
 \hline
  5,000 & 471 & 6,730 & 529.51 & 749 & 5914 & 531.80 & 1,344 & 9,577 & 3,452.8 \\
   \hline
  5,500 & 562& 8,298 & 745.75 & 386 & 3,099 & 318.62 & 699 & 5,235 & 2,304.4 \\
 \hline
 6,000 & 495 & 7,288 & 729.18 & 463 & 3,723 & 434.43 & 737 & 5,448 & 2,901.1 \\
  \hline
  6,500 & 429& 6,327 & 704.88 & 472 & 3,837 & 509.09 & 904 & 6,426 & 3,968.6 \\
 \hline
 7,000 & 398 & 5,857 & 779.04 & 829 & 6605 & 976.66 & 1,586 & 11,587 & 8,272.6 \\
  \hline
  7,500 & 360 & 5,256 & 791.16 & 389 & 3,064 & 517.58 & 707 & 5,125 & 4,228.3 \\
 \hline
 8,000 & 699 & 10,761 & 1,769.3 & 1,248 & 10,327 & 1,866.7 & 2,969 & 22,200 & 20,543.0 \\
 \hline

\multicolumn{10}{l}{%
  \begin{minipage}{12cm}%
    \tiny $\dag$: {\sl Euclidean algorithm} parameters $\gamma_0=0.2, \lambda=0.4
    .$\\
    \tiny $\ddag$: {\sl p-norm algorithm} parameters $\gamma_0=0.2, \lambda=0.2
    .$\\
    \tiny $\S$: {\sl entropy algorithm} parameters $\gamma_0=0.8, \lambda=0.2
    .$\\
  \end{minipage}}
\end{tabular}
\caption{Euclidean vs. non-Euclidean for HP Hard problem}
\end{table}

\begin{table}
\centering \label{rg}
\footnotesize
\begin{tabular}{| c| c| c|c|c| c|c|c| c|c|c| c|c|}
\hline \multirow{3}{*}{}& \multicolumn{9}{|c|}{Algorithms} \\
\cline{2-10}
 & \multicolumn{3}{|c|}{Euclidean$\dagger $}& \multicolumn{3}{|c|}{p-norm$\ddagger $}& \multicolumn{3}{|c|}{entropy$\S$}\\ \cline{2-10} $n$ & $k$& $np$ & CPU time& $k$& $np$ &CPU time& $k$& $np$ & CPU time \\
 \hline
  1,000 & 250 & 2,001 & 13.993 & 55 & 482 & 8.7361 & 104 & 497 & 11.0605 \\
  \hline
  1,500 & 4,381 & 35,049 & 561.82 & 1,630 & 14,647 & 352.56 & $\sharp$ & $\sharp$ & $\sharp$ \\
 \hline
  2,000 & 1,627 & 14,017 & 223.88 & 791 & 6,994 & 208.48 & 3,170 & 16,084 & 822.72 \\
  \hline
  2,500 & $\sharp$ & $\sharp$ & $\sharp$ & 223 & 1,963 & 87.454 & $\sharp$ & $\sharp$& $\sharp$ \\
 \hline
 3,000 & 1,046 & 9,415 & 307.67 & 36 & 313 & 15.928 & 409& 2.070& 264.99\\
 \hline
 \multicolumn{10}{l}{%
  \begin{minipage}{12cm}%
    \tiny $\sharp$: indicates that the number of projection calls $np>100,000.$\\
    \tiny $\dag$: {\sl Euclidean algorithm} parameters $\gamma_0=0.8, \lambda=0.2
    .$\\
    \tiny $\ddag$: {\sl p-norm algorithm} parameters $\gamma_0=0.2, \lambda=0.4
    .$\\
    \tiny $\S$: {\sl entropy algorithm} parameters $\gamma_0=0.2, \lambda=0.2
    .$\\
  \end{minipage}}
\end{tabular}

\caption{Euclidean vs. non-Euclidean for randomly generated
instances}

\end{table}

\section{Conclusion}
This paper studies a class of generalized monotone variational inequality (GMVI) problems
whose operators are not necessarily monotone (e.g., pseudo-monotone) or Lipschitz continuous. Our main constribution
consists of: i) defining proper termination criterion for solving these VI problems; ii)
presenting non-Euclidean extragradient (N-EG) methods for computing 
approximate strong solutions of these problems; iii) demonstrating how the iteration complexities
of the N-EG methods depend on the global Lipschitz or H\"{o}lder continuity properties for their operators and the smoothness 
properties for the distance generating function used in the N-EG algorithms; and iv) 
introducing a variant of the N-EG algorithm by incorporating a simple line-search procedure 
to deal with problems with more general, not necessarily H\"{o}lder continuous operators. Moreover, 
numerical studies are conducted
to illustrate the significant advantages of the developed algorithms
over the existing ones for solving large-scale GMVI problems.

\bibliographystyle{plain}
\bibliography{../glan-bib}

\newcommand{\noopsort}[1]{} \newcommand{\printfirst}[2]{#1}
  \newcommand{\singleletter}[1]{#1} \newcommand{\switchargs}[2]{#2#1}
\begin{thebibliography}{10}

\bibitem{AuTe05-1}
A.~Auslender and M.~Teboulle.
\newblock Interior projection-like methods for monotone variational
  inequalities.
\newblock {\em Mathematical Programming}, 104:39--68, 2005.

\bibitem{AuTe06-1}
A.~Auslender and M.~Teboulle.
\newblock Interior gradient and proximal methods for convex and conic
  optimization.
\newblock {\em SIAM Journal on Optimization}, 16:697--725, 2006.

\bibitem{BBC03-1}
H.H. Bauschke, J.M. Borwein, and P.L. Combettes.
\newblock Bregman monotone optimization algorithms.
\newblock {\em SIAM Journal on Controal and Optimization}, 42:596--636, 2003.

\bibitem{BenNem00}
A.~Ben-Tal and A.~S. Nemirovski.
\newblock {\em Lectures on Modern Convex Optimization: Analysis, Algorithms,
  Engineering Applications}.
\newblock MPS-SIAM Series on Optimization. SIAM, Philadelphia, 2000.

\bibitem{Bert99}
D.~Bertsekas.
\newblock {\em Nonlinear Programming}.
\newblock Athena Scientific, New York, second edition, 1999.

\bibitem{Breg67}
L.M. Bregman.
\newblock The relaxation method of finding the common point convex sets and its
  application to the solution of problems in convex programming.
\newblock {\em USSR Comput. Math. Phys.}, 7:200--217, 1967.

\bibitem{FacPang03}
F.~Facchinei and J.~Pang.
\newblock {\em Finite-Dimensional Variational Inequalities and Complementarity
  Problems, Volumes I and II}.
\newblock Comprehensive Study in Mathematics. Springer-Verlag, New York, 2003.

\bibitem{GafBer84-1}
E.~M. Gafni and D.~P. Bertsekas.
\newblock Two-metric projection methods for constrained optimization.
\newblock {\em SIAM Journal on Control and Optimization}, 22:936--964, 1984.

\bibitem{HarPang90}
P.~T. Harker and J.~Pang.
\newblock A damped-{N}ewton method for the linear complementarity problem.
\newblock In {\em Computational solution of nonlinear systems of equations
  ({F}ort {C}ollins, {CO}, 1988)}, volume~26 of {\em Lectures in Appl. Math.},
  pages 265--284. Amer. Math. Soc., Providence, RI, 1990.

\bibitem{HaPan90-1}
P.~T. Harker and J.~S. Pang.
\newblock Finite-dimensional variational inequality and complementarity
  problems: a survey of theory, algorithms, and applications.
\newblock {\em Mathematical Programming, Series B}, 48:161--220, 1990.

\bibitem{Hearn82-1}
D.W. Hearn.
\newblock The gap function of a convex program.
\newblock {\em Operations Research Letters}, 1:67--71, 1982.

\bibitem{KoNeSh11-1}
A.~Nedich J.~Koshal and U.~V. Shanbhag.
\newblock Multiuser optimization: Distributed algorithms and error analysis.
\newblock {\em SIAM Journal on Optimization}, 21:1168--1199, 2011.

\bibitem{Kiw97-1}
K.C. Kiwiel.
\newblock Proximal minimization methods with generalized bregman functions.
\newblock {\em SIAM Journal on Controal and Optimization}, 35:1142--1168, 1997.

\bibitem{Kor76}
G.~Korpelevich.
\newblock The extragradient method for finding saddle points and other
  problems.
\newblock {\em Eknomika i Matematicheskie Metody}, 12:747--756, 1976.

\bibitem{Lan10-3}
G.~Lan.
\newblock An optimal method for stochastic composite optimization.
\newblock {\em Mathematical Programming}, 133(1):365--397, 2012.

\bibitem{LaLuMo11-1}
G.~Lan, Z.~Lu, and R.~D.~C. Monteiro.
\newblock Primal-dual first-order methods with {${\cal O}(1/\epsilon)$}
  iteration-complexity for cone programming.
\newblock {\em Mathematical Programming}, 126:1--29, 2011.

\bibitem{LanMon08-1}
G.~Lan and R.~D.~C. Monteiro.
\newblock Iteration-complexity of first-order penalty methods for convex
  programming.
\newblock Manuscript, School of Industrial and Systems Engineering, Georgia
  Institute of Technology, Atlanta, GA 30332, USA, June 2008.

\bibitem{LanMon09-1}
G.~Lan and R.~D.~C. Monteiro.
\newblock Iteration-complexity of first-order augmented lagrangian methods for
  convex programming.
\newblock Technical report, Department of Industrial and Systems Engineering,
  University of Florida, Gainesville, FL 32611, USA, September 2013.
\newblock {\sl Mathematical Programming} (Under second-round review).

\bibitem{MonSva09-1}
R.D.C. Monteiro and B.F. Svaiter.
\newblock On the complexity of the hybrid proximal extragradient method for the
  iterates and the ergodic mean.
\newblock Manuscript, School of ISyE, Georgia Tech, Atlanta, GA, 30332, USA,
  March 2009.

\bibitem{MonSva10-3}
R.D.C. Monteiro and B.F. Svaiter.
\newblock Complexity of variants of tseng�s modified f-b splitting and
  korpelevich's methods for hemi-variational inequalities with applications to
  saddle-point and convex optimization problems.
\newblock Manuscript, School of ISyE, Georgia Tech, Atlanta, GA, 30332, USA,
  June 2010.

\bibitem{Nem05-1}
A.~S. Nemirovski.
\newblock Prox-method with rate of convergence $o(1/t)$ for variational
  inequalities with lipschitz continuous monotone operators and smooth
  convex-concave saddle point problems.
\newblock {\em SIAM Journal on Optimization}, 15:229--251, 2005.

\bibitem{NJLS09-1}
A.~S. Nemirovski, A.~Juditsky, G.~Lan, and A.~Shapiro.
\newblock Robust stochastic approximation approach to stochastic programming.
\newblock {\em SIAM Journal on Optimization}, 19:1574--1609, 2009.

\bibitem{nemyud:83}
A.~S. Nemirovski and D.~Yudin.
\newblock {\em Problem complexity and method efficiency in optimization}.
\newblock Wiley-Interscience Series in Discrete Mathematics. John Wiley, XV,
  1983.

\bibitem{Nest04}
Y.~E. Nesterov.
\newblock {\em Introductory Lectures on Convex Optimization: a basic course}.
\newblock Kluwer Academic Publishers, Massachusetts, 2004.

\bibitem{Nest07-2}
Y.~E. Nesterov.
\newblock Dual extrapolation and its applications to solving variational
  inequalities and related problems.
\newblock {\em Mathematical Programming}, 109:319--344, 2007.

\bibitem{PangGab93}
J.~Pang and S.~A. Gabriel.
\newblock Ne/sqp: A robust algorithm for the nonlinear complementarity problem.
\newblock {\em Mathematical Programming}, 60:295--337, 1993.

\bibitem{Sibony70}
M.~Sibony.
\newblock M\'{e}thodes it\'{e}ratives pour les \'{e}quations et in\'{e}quations
  aux d\'{e}riv\'{e}es partielles nonlin\'{e}ares de type monotone.
\newblock {\em Calcolo}, 7:65--183, 1970.

\bibitem{SS99-1}
M.~V. Solodov and B.~F. Svaiter.
\newblock A hybrid approximate extragradient-proximal point algorithm using the
  enlargement of a maximal monotone operator.
\newblock {\em Set-Valued Anal.}, 7(4):323�345, 1999.

\bibitem{SolSva99-1}
M.~V. Solodov and B.~F. Svaiter.
\newblock A new projection method for variational inequality problems.
\newblock {\em SIAM Journal on Control and Optimization}, 37:765--776, 1999.

\bibitem{Sun95-1}
D.~Sun.
\newblock A new step-size skill for solving a class of nonlinear projection
  equations.
\newblock {\em Journal of Computational Mathematics}, pages 357--368, 1995.

\bibitem{Teb97-1}
M.~Teboulle.
\newblock Convergence of proximal-like algorithms.
\newblock {\em SIAM Journal on Optimization}, 7:1069--1083, 1997.

\bibitem{tseng00-1}
P.~Tseng.
\newblock A modified forward-backward splitting method for maximal monotone
  mappings.
\newblock {\em SIAM Journal on Control and Optimization}, 38:431--446, 2000.

\bibitem{Wat79}
L.~T. Watson.
\newblock {Solving the nonlinear complementarity problem by a homotopy method.}
\newblock {\em SIAM J. Control Optimization}, 17:36--46, 1979.

\end{thebibliography}
\end{document}